\numberwithin{equation}{section}
\newtheorem{theorem}{Theorem}[section]
\newtheorem*{theorem*}{Theorem}
\newtheorem{lemma}[theorem]{Lemma}
\newtheorem{claim}[theorem]{Claim}
\newtheorem{proposition}[theorem]{Proposition}
\newtheorem{observation}[theorem]{Observation}
\newtheorem{corollary}[theorem]{Corollary}
\theoremstyle{definition}{

\newtheorem*{definition*}{Definition}

\newtheorem*{question*}{Question}
\newtheorem*{example*}{Example}
\newtheorem*{examples*}{Examples}
\newtheorem{remark}[theorem]{Remark}
\newtheorem*{remark*}{Remark}

}
\newcommand{\semic}{{\textsc{sc}}}
\DeclareMathOperator{\tr}{tr}
\DeclareMathOperator{\xor}{\triangle}
\newcommand{\abbr}[1]{{\sc{\lowercase{#1}}}}
\newcommand{\bA}{\mathbf{A}}
\newcommand{\ba}{\mathbf{a}}
\newcommand{\bB}{\mathbf{B}}
\newcommand{\bD}{\mathbf{D}}
\newcommand{\bDelta}{\mathsf{\Delta}}
\newcommand{\bLambda}{\boldsymbol{\Lambda}}
\newcommand{\bM}{\mathbf{M}}
\newcommand{\brho}{\boldsymbol{\rho}}
\newcommand{\bX}{\mathbf{X}}
\newcommand{\bY}{\mathbf{Y}}
\newcommand{\CC}{\mathbb{C}}
\newcommand{\EE}{\mathbb{E}}
\newcommand{\NN}{\mathbb{N}}
\newcommand{\PP}{\mathbb{P}}
\newcommand{\RR}{\mathbb{R}}
\newcommand{\ZZ}{\mathbb{Z}}
\newcommand{\cL}{\mathcal{L}}
\newcommand{\cS}{\mathcal{S}}
\newcommand{\sfC}{\mathsf {C}}
\newcommand{\sfD}{\mathsf {D}}
\newcommand{\sfE}{\mathsf {E}}
\newcommand{\sfG}{\mathsf {G}}
\newcommand{\sfH}{\mathsf {H}}
\newcommand{\sfV}{\mathsf{V}}
\newcommand{\sfX}{\mathsf {X}}
\newcommand{\sfY}{\mathsf{Y}}
\newcommand{\sfm}{\mathsf{m}}
\newcommand{\sE}{\mathscr{E}}
\newcommand{\sG}{\mathscr{G}}
\newcommand{\one}{\mathbbm{1}}
\newcommand{\PPo}{\operatorname{Po}}
\newcommand{\Bin}{\operatorname{Bin}}
\newcommand{\supp}{\operatorname{supp}}
\renewcommand{\epsilon}{\varepsilon}
\author{Amir Dembo}
\address{Amir Dembo\hfill\break
Department of Mathematics\\ Stanford University\\ Sloan Hall \\
Stanford, CA 94305, USA.}
\email{amir@math.stanford.edu}
\author{Eyal Lubetzky}
\address{Eyal Lubetzky\hfill\break
Courant Institute 
\\ New York University\\
251 Mercer Street\\ New York, NY 10012, USA.}
\email{eyal@courant.nyu.edu}
\author{Yumeng Zhang}
\address{Yumeng Zhang\hfill\break
Department of Statistics\\ Stanford University\\ 390 Serra Mall \\
Stanford, CA 94305,~USA.}
\email{zym3008@gmail.com}
\title[Empirical spectral distributions of sparse random graphs]
{Empirical spectral distributions\\ of
sparse random graphs}
\begin{document}

\begin{abstract}
We study the spectrum of a random multigraph with
a degree sequence $\bD_n=(D_i)_{i=1}^n$ and average degree $1 \ll \omega_n \ll n$, generated by the configuration model, and also the spectrum of the analogous random simple graph.
We show that, when the empirical spectral distribution (\abbr{ESD}) of $\omega_n^{-1} \bD_n $ converges weakly to a limit $\nu$, under mild moment assumptions (\emph{e.g.}, $D_i/\omega_n$ are i.i.d.\ with a finite second moment),
the \abbr{ESD} of the normalized adjacency matrix converges in probability to
$\nu\boxtimes \sigma_{\semic}$, the free multiplicative convolution of $\nu$ with the semicircle law. Relating this limit
with a variant of the Marchenko--Pastur law yields the continuity of its
density (away from zero), and an effective procedure for determining its support.

Our proof of  convergence is based on a 
coupling between the random simple graph and multigraph with the same degrees, which might be of independent interest. We further construct and rely on a coupling of the multigraph to an inhomogeneous Erd\H{o}s-R\'enyi graph with the target \abbr{ESD}, using three intermediate random graphs, with a negligible fraction of edges modified in each step.
  \end{abstract}


{\mbox{}\vspace{-1.85cm}
\maketitle
}
\vspace{-0.48cm}

\section{Introduction}\label{secIntro}

We study the spectrum of a  random multigraph $\sfG_n=([n],\sfE_n)$ of $n$ vertices 
of degrees $\{D_i^{(n)}\}_{i=1}^n$,  constructed by the configuration model, where 
the \emph{even} 
\begin{equation}\label{eq-omega-choice} 
\sum_{i=1}^n D_i^{(n)} = 2|\sfE_n| := n \omega_n (1+o(1)) \,, 
\end{equation}
is assumed to be such that 
\begin{equation}\label{eq-assumption-En}  
\omega_n \to\infty\,,\qquad \omega_n = o(n) \,.
\end{equation}
Specifically, setting $[n]=\{1,2,\ldots,n\}$, equip each vertex $i \in [n]$ with $D_i^{(n)}$ half-edges, 
whereby the edge set $\sfE_n$ results from a uniformly chosen perfect matching 
of the $2|\sfE_n|$ half-edges. 
The uniformly chosen  \emph{simple} graph $\sG_n=([n],\sE_n)$
with the degrees $D_i^{(n)}$ --- assuming of course that this degree sequence 
is graphical
(i.e., there exist simple graphs with these degrees) ---
is similarly described via a uniform perfect matching of half-edges, 
subject to the constraint of 
having neither self-loops nor multiple edges.

Our study of the spectrum of the adjacency matrix $\bA_{\sfG_n}$ of the multigraph $\sfG_n$,
proceeds through a sequence of couplings, relating it to certain ``band'' matrices, with independent albeit non-identically-distributed entries (adjacency matrices of Erd\H{o}s-R\'enyi \emph{inhomogeneous} random graphs). Various spectral features of the latter will then be derived using the powerful tools that have been developed in the last few decades in random matrix theory and free probability.

In Proposition \ref{prop:coupling} we further provide a novel
coupling 
of $\sG_n$ and $\sfG_n$, which may be of independent interest. Utilizing this coupling we deduce
that the uniformly chosen random \emph{simple} graph $\sG_n$, 
satisfying the same degree assumptions as $\sfG_n$, will also have the same limiting spectrum.

For random \emph{regular} graphs---the case of $D_i^{(n)}=d_n$ for all $i$--- it was shown by Tran, Vu and Wang~\cite{TVW13} (extending a previous result of~\cite{DuPa12}) that, whenever $ d_n\gg 1$, the empirical spectral distribution (\abbr{ESD}, defined for a symmetric matrix $\bA$ with eigenvalues $\lambda_1\geq\ldots\geq\lambda_n$ as $\cL^{\bA} = \frac1n\sum_{i=1}^n\delta_{\lambda_i}$)
of the normalized matrix  $\hat \bA_{\sfG_n}= \frac1{\sqrt{d_n}} \bA_{\sfG_n}$ converges weakly, in probability, to $\sigma_{\semic}$, the standard semicircle law (with support $[-2,2]$).

The non-regular case with $|\sfE_n|=O(n)$ has been studied by Bordenave and Lelarge~\cite{BoLa10} when the graphs $\sfG_n$ converge in the Benjamini--Schramm sense, translating in the above setup to having $\{D_i^{(n)}\}$ that are i.i.d.\ in $i$ and uniformly integrable in $n$. 
The existence and uniqueness of the limiting \abbr{ESD} was obtained in~\cite{BoLa10} by relating this \abbr{ESD} to a recursive distributional equation --- arising from the Galton--Watson trees that correspond to the local neighborhoods in $\sfG_n$ --- and showing that this equation has a unique fixed point. See also, e.g.,~\cite{Bordenave17,CoSa18,Salez16} and the references therein, for the analysis of the limiting spectrum at $\lambda=0$ for Erd\H{o}s--R\'enyi graphs of constant average degree.
Note that (a) this approach relies on the locally-tree-like structure of the graphs, and is thus tailored for low (at most logarithmic) degrees; and (b) very little is known on this limit, even in seemingly simple settings such as when all degrees are either 3 or 4.

At the other extreme, when $|\sfE_n|$ diverges polynomially with $n$ (whence the tree approximations are invalid), the trace method---the standard tool for establishing the convergence of the \abbr{ESD} of an Erd\H{o}s--R\'enyi random graph to $\sigma_{\semic}$---faces the obstacle of non-negligible dependencies between edges in the configuration model (the trace method can handle dependencies, but here 
$n^{-1} \tr  ( \, (\EE \hat \bA_{\sfG_n})^{2k} ) \asymp \omega_n^k$, thus the  
precise cancellations of many diverging terms are needed for it to work; such cancellations 
are very difficult to attain in the presence of dependencies).

\subsection{Limiting ESD as a free multiplicative convolution }\label{sec:intro-1} 
 
Our assumptions on the triangular sequence $\{D_i^{(n)}\}$ 
of degrees are that \eqref{eq-assumption-En} holds,
and in addition, for $\omega_n$ satisfying \eqref{eq-omega-choice}, 
 the normalized degrees $\hat D_i^{(n)} = D_i^{(n)}/\omega_n$ satisfy that
\begin{equation}\label{eq-Dn-moment}
\{ \hat D_{U_n}^{(n)} \}
\quad\mbox{ is uniformly integrable with }\quad
\EE [( \hat D_{U_n}^{(n)} )^2] = o(\sqrt{n/\omega_n})\,,
\end{equation}
where $U_n$ is uniformly chosen in $\{1,\ldots,n\}$. Let
\[ \hat \bA_{\sfG_n} :=\omega_n^{-1/2} A_{\sfG_n}\qquad\mbox { and }\qquad \hat \bLambda_n = \mathrm{diag}(\hat D^{(n)}_1,\ldots,\hat D^{(n)}_n)\,.\]
Call a degree sequence $\{D_i^{(n)}\}$ \emph{graphical} if for every $n$ there exists a simple graph $\sG_n$ with such degrees (equivalently,
the criterion of the Erd\H{o}s--Gallai theorem \cite{EG60} is met).
\begin{theorem}\label{thm:1}
Let $\{D_i^{(n)}\}_{i=1}^n$ be a degree sequence satisfying~\eqref{eq-omega-choice}--\eqref{eq-Dn-moment}, and further suppose that the \abbr{ESD} $ \cL^{\hat \bLambda_n} $ converges weakly to a limit~$\nu_{\hat D}$. 
\begin{enumerate}[(a)]
	\item The \abbr{ESD} $\cL^{\hat \bA_{\sfG_n}}$ corresponding to the multigraph $\sfG_n=([n],\sfE_n)$ with degrees $\{D_i^{(n)}\}_{i=1}^n$ (generated via the configuration model),
 converges weakly, in probability, to $\nu_{\hat D} \boxtimes \sigma_{\semic}$.

	\item If $\{D_i^{(n)}\}$ is graphical then  
	the same convergence holds for the \abbr{ESD} $\cL^{\hat \bA_{\sG_n}}$ corresponding to a uniformly chosen simple graph $\sG_n=([n],\sE_n)$ with this degree sequence. 
\end{enumerate}

\end{theorem}

In the above theorem, the \emph{free multiplicative convolution} of a symmetric probability measure $\psi$ and a probability 
measure $\varphi$ on $\RR_+$
with $\varphi,\psi\neq \delta_0$, denoted $\varphi \boxtimes \psi$, is  the unique probability measure such that $S_{\varphi\boxtimes \psi}(z) = S_\varphi(z) S_{\psi}(z)$ for $z$ in the common domain of the corresponding $S$-transforms (see~\cite[Thm.~7]{ArPe09}, extending the definition of 
$\varphi\boxtimes \psi$ from~\cite{BeVo93} and~\cite{Voiculescu87} in
case both $\varphi$, $\psi$, are of bounded support and non-zero mean). 
To define the $S$-transform,
recall that the Cauchy--Stieltjes transform of a probability measure $\mu$
on $\RR$, uniquely determining it, is 
$G_\mu(z) := \int [t-z]^{-1}d\mu(t)$. For $\varphi$ as above, the related 
\begin{equation}\label{dfn:m-trans}
m_\varphi(z):= 
z^{-1} G_\varphi(z^{-1}) -1 =
\int \frac{zt}{1-zt} d \varphi(t)
 \,,
\end{equation}
is invertible as a formal power series in $z \in \CC_+$,
and the $S$-transform is defined as 
\begin{equation}\label{eq:S-trans}S_\varphi(w):=(1+w^{-1}) m_\varphi^{-1}(w)\quad\mbox{ for $w\in m_\varphi(\CC_+)$}	
\end{equation}
(cf., \emph{e.g.},~\cite[Prop. 1]{ArPe09}).
Following the extension in~\cite{RaSp07}
of the $S$-transform to measures of zero mean and
finite moments of all order, the $S$-transform is similarly defined for $\psi$ as above in~\cite[Thm.~6]{ArPe09}. In particular, with $\sigma_{\semic}$ being symmetric and $\nu_{\hat D} \ne \delta_0$ supported on $\RR_+$, the measure $\nu_{\hat D} \boxtimes \sigma_{\semic}$ is well-defined.

\begin{figure}[t]
\centering
\includegraphics[width=.49\textwidth]{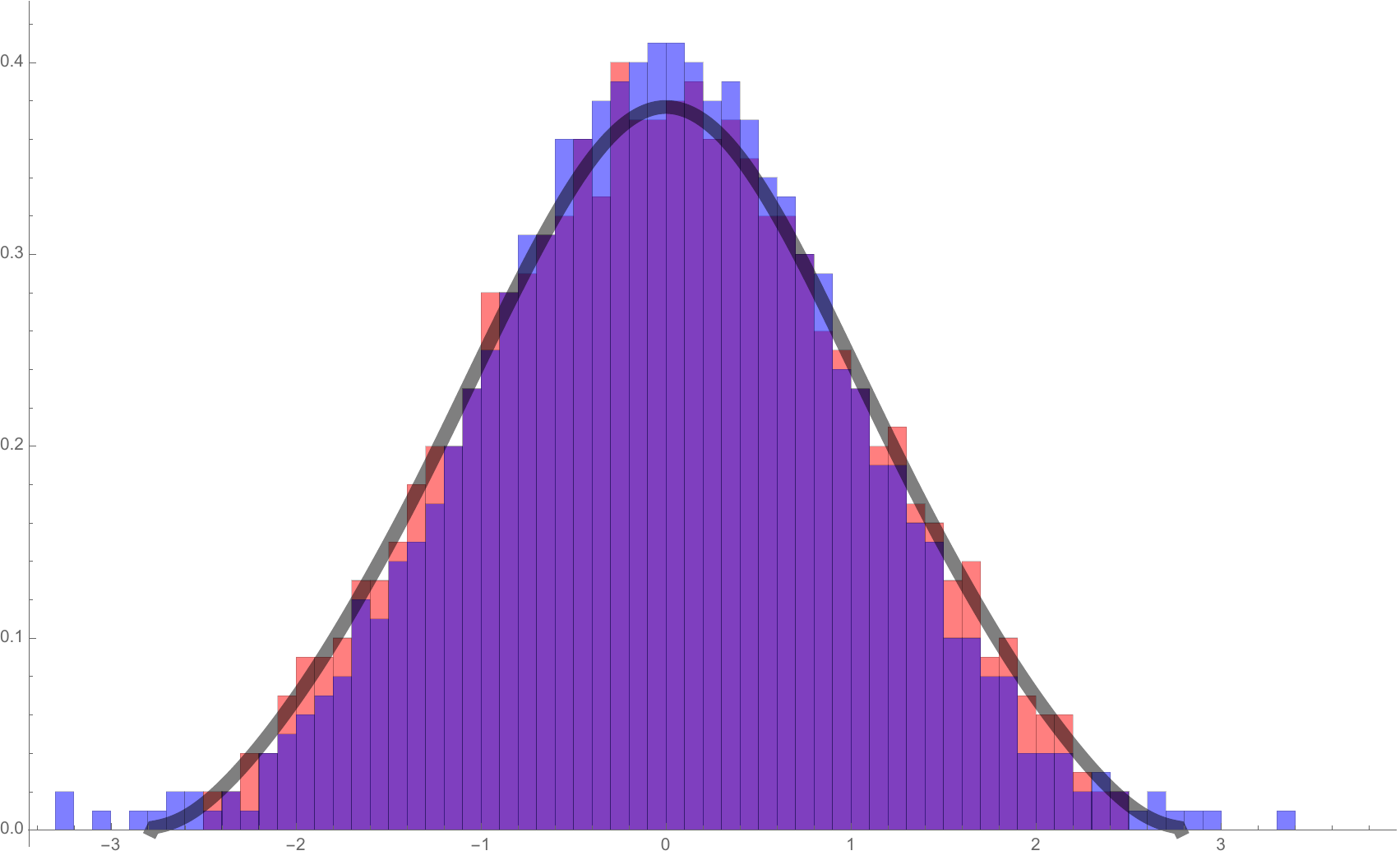}
\includegraphics[width=.49\textwidth]{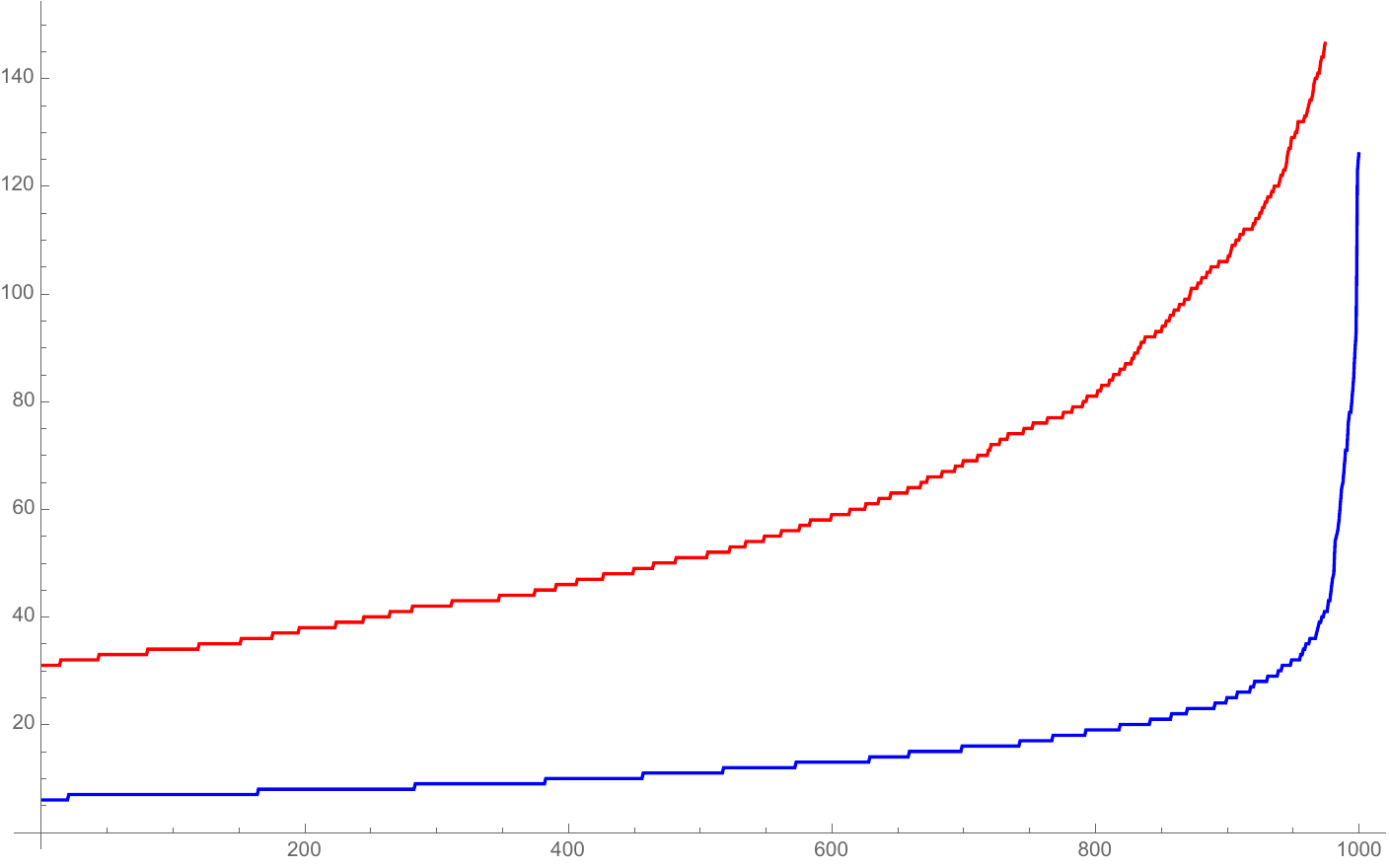}
\caption{Spectra of two random multigraphs on $n=1000$ vertices with different degree sequences $\{D_i\}$. In red, $D_i=[\tau_i\sqrt{n}]$ for all $i$, and in blue, $D_i=[\tau_i\log{n}]$ for $i <n-\sqrt{n}$ and $D_i=[\tau_i \sqrt{n}]$ for $i\geq n-\sqrt{n}$, with  $\tau_i\sim1+\mathrm{Exp}(1)$ i.i.d.\ (right plot). The limiting law for the \abbr{esd}, shown by Theorem~\ref{thm:1} to be $\nu_{\hat D}\boxtimes \sigma_{\semic}$, is plotted in black (left plot).}
\label{fig:c1-tmix}
\end{figure}

\begin{corollary}\label{cor-iid}
Let $\{\hat{D}_i^{(n)} : 1\leq i \leq n\}$ be i.i.d.\ for each $n$,
such that $\EE\hat D_1^{(n)}=1$, $\sup_n \EE[ (\hat D_1^{(n)})^2] < \infty$,
and the law of $\hat {D}_1^{(n)}$ converges weakly to some $\nu_{\hat D}$. 
For $\omega_n\to\infty$ such that $\omega_n = o(n)$, let $\sfG_n$ denote
the uniform multigraph of degrees $D_i^{(n)} = [ \omega_n \hat D_i^{(n)}]$
(modifying $D_n^{(n)}$ by one if needed for an even sum). 
Further, for any integers $\bar d_n=o(n)$ with $\omega_n = o(\bar d_n)$,
the truncated  degrees $[\omega_n \hat D_i^{(n)}] \wedge \bar d_n$ 
are graphical \abbr{whp} (after increasing the minimal degree by one, 
if needed, for an even sum). Denoting by
$\sG_n$ the uniform simple graph, both \abbr{ESD}s $\cL^{\hat \bA_{\sfG_n}}$ 
and $\cL^{\hat \bA_{\sG_n}}$ converge weakly, in probability, to
$\nu_{\hat D} \boxtimes \sigma_{\semic}$.
\end{corollary}

\begin{remark}\label{rem:goe}
The reason for the appearance of
$\nu_{\hat D} \boxtimes \sigma_{\semic}$
in our context is due to the fact that it is the limiting \abbr{ESD} 
of $\bB_n := \hat \bLambda_n^{1/2} \bX_n \hat \bLambda_n^{1/2}$ when 
$\max_i \hat{D}_i^{(n)} = O(1)$ and $\bX_n$ is a standard \abbr{GOE} random matrix. 
Indeed, as its name suggest, the free multiplicative convolution 
$\varphi \boxtimes \psi$ is the law of the 
product $a b$ of free, bounded, random non-commutative operators 
$a$ of law $\varphi$ and $b$ of law $\psi$ (cf.~\cite[Defn. 5.2.1, 5.2.3, 5.3.1, 5.3.28]{AGZ10} 
for the precise meaning of all this). This extends to the 
limiting \abbr{ESD} for the product of \emph{asymptotically free} matrices: 
two sequences $\bX_n,\bY_n$ of random self-adjoint, matrices are asymptotically free if
$\EE[\tr_n(f_1(\bX_n) g_1(\bY_n)\cdots f_k(\bX_n) g_k(\bY_n))]=o(1)$
for the normalized trace $\tr_n(\cdot)=\frac{1}{n} \tr(\cdot)$ and
any collections $(f_i)_{i=1}^k$ and $(g_i)_{i=1}^k$ of polynomials (with $k$ fixed) that satisfy $\EE[\tr_n(f_i(\bX_n))]=o(1)$ and $\EE[\tr_n(g_i(\bY_n))]=o(1)$ for all $1\leq i\leq k$
(see~\cite[Defn.~5.4.1]{AGZ10} or~\cite[\S2.5]{Tao12}).
It is known that the \abbr{GOE} $\bX_n$ is asymptotically 
free of any uniformly bounded diagonal
$\hat \bLambda_n$ (see, \emph{e.g.},~\cite[Theorem~5.4.5]{AGZ10}),
which in turn implies that 
$\nu_{\hat D} \boxtimes \sigma_{\semic}$ is 
the weak limit of the \abbr{esd} for the
random matrices $\bB_n
$ (the spectral radius of the \abbr{GOE} $\bX_n$ is $O(1)$
with high 
probability,
so by a standard truncation argument we arrive at the 
bounded case of  \cite[Corollary~5.4.11(iii)]{AGZ10}).
\end{remark}

Theorem~\ref{thm:1} and Corollary~\ref{cor-iid} are proved in~\S\ref{s:conv}. This is achieved by first analyzing the \abbr{esd} of the random multigraph $\sfG_n$; the move from multigraphs to simple graphs is achieved via the following proposition, which we prove in~\S\ref{sec:coupling}.

\begin{proposition}\label{prop:coupling}
Fixing graphical degrees $D_1 \ge D_2 \ge \cdots \ge D_n$, let $\sfG_n$ and $\sG_n$ be 
the corresponding random multigraph and uniform simple graph, respectively.
There exists a coupling $\mu$ between the matchings
which yield $\sfG_n$ and $\sG_n$ so the number $\bDelta_n \le 2|\sfE_n|$
of half-edges whose matching links are different between the two graphs, 
 satisfies
\begin{align}\label{e:deltaEn}
 \EE_\mu [  \bDelta_n (\bDelta_n-1) ]  &\le  4\sum_{i=1}^{n-1} \sum_{j=i+1}^{i+D_i} (2 D_i D_j - D_i - D_j)  
 \,. 
\end{align}	
\end{proposition}	
\begin{remark} A crude, yet already useful, upper bound on the \abbr{rhs} of~\eqref{e:deltaEn} is
\begin{equation}
	8\sqrt{2|\sfE_n|} \sum_{i=1}^n D_i^2 \label{eq:deltaEn-simple}\,.
\end{equation}
(Indeed, $\big(\sum_{j=i+1}^{i+D_i} D_j\big)^2 \le D_i \sum_{j=1}^n D_j^2 $ by Cauchy--Schwarz for any $i\in[n]$; thus, again by Cauchy--Schwarz, the \abbr{rhs} of~\eqref{e:deltaEn} is at most $8 (\sum_i D_i^2 )(\sum_i D_i)^{\frac12}$, and $\sum_i D_i = 2|\sfE_n|$.)
In general, the \abbr{rhs} of \eqref{e:deltaEn} can be replaced by any bound
on the expected number of pairs of half-edges $e \ne f$ on a which a ``switch'' would yield a non-simple graph. 
\end{remark}

\begin{remark} The proof of Theorem \ref{thm:1}(a) extends to the dense regime, 
where $\omega_n /n$ is bounded below (and above). However, the minimal expected 
edit distance between $\sfG_n$ and $\sG_n$ exceeds the expected number 
$O(\omega_n^2)$ of parallel edges in $\sfG_n$, which in the dense regime is
already $O(|\sfE_n|)$, thereby blocking in the dense regime
our route to Theorem \ref{thm:1}(b) as a consequence of part (a).
Further, our assumption \eqref{eq-Dn-moment} allows 
having a maximal degree that far exceeds $n$ (indeed, prior to truncation 
this happens in the i.i.d.\ setting of Corollary \ref{cor-iid}). Even for specified 
graphical degrees, the number of simple graphs  $\sG_n$ oscillates widely 
as the degrees change, so \eqref{eq-Dn-moment} might not suffice for 
the statement of Theorem \ref{thm:1}(b) to be true in the dense regime. 
Going back to the sparse regime, assumptions \`a la~\eqref{eq-Dn-moment}
have little to do with controlling extreme eigenvalues, or with bringing the
corresponding local law to the celebrated \abbr{GOE}-universality class 
of homogeneous  Erd\H{o}s-R\'enyi graphs. Indeed, one 
must further restrict $\hat \bLambda_n$, in order to have any hope of 
transferring the many fine results on extreme eigenvalues and local laws 
that are available for the \abbr{GOE}, via 
$\bB_n$ of Remark  \ref{rem:goe} to $\hat \bA_{\sfG_n}$.
\end{remark}

\subsection{Properties of the limiting ESD}\label{sec:intro-2}

The next two propositions, proved in~\S\ref{sec:dens},
relate the limiting measure
$\nu_{\hat D} \boxtimes \sigma_{\semic}$ with a
Marchenko--Pastur law, and thereby, via~\cite{SiCh95},
yield its support and density regularity. 

\begin{proposition}\label{lem-mu-Stil}
 For the law $\nu_{\hat D}$  of a nonnegative random variable $\hat D$ with $\EE \hat D = 1$,
 let $\mu_{\textsc{mp}}$ be the Marchenko--Pastur limit 
 (on $\RR_+$)
of the \abbr{esd}  of $n^{-1} \bLambda_n \widetilde \bX_n \widetilde \bX_n^\star \bLambda_n$, in which
 the non-symmetric $\widetilde \bX_n$ has
standard i.i.d.\ complex Gaussian entries and
$\cL^{\bLambda_n} \Rightarrow \nu$ for
non-negative diagonal matrices $\bLambda_n$
and the size-biased $\nu$ with
$\frac{d \nu}{d\nu_{\hat D}} (x) = x$ on $\RR_+$.
The free multiplicative convolution $\mu = \nu_{\hat D} \boxtimes \sigma_{\semic}$
has the Cauchy--Stieltjes transform
\begin{align}
  \label{eq:f-form}
G_\mu(z)=
 -z^{-1} \Big[1 + G_{\widetilde \mu}(z)^2 \Big] \,, \quad \forall z \in \CC_+ \,,
\end{align}
where  $\widetilde \mu$ is
the law of the symmetric $X$ such that $X^2$ is distributed according to $\mu_{\textsc{mp}}$.	
\end{proposition}

\begin{figure}
\vspace{-0.5cm}
\centering
  \begin{tikzpicture}[scale=0.8]
    \node (fig1) at (0,0) {
	\includegraphics[width=0.65\textwidth]{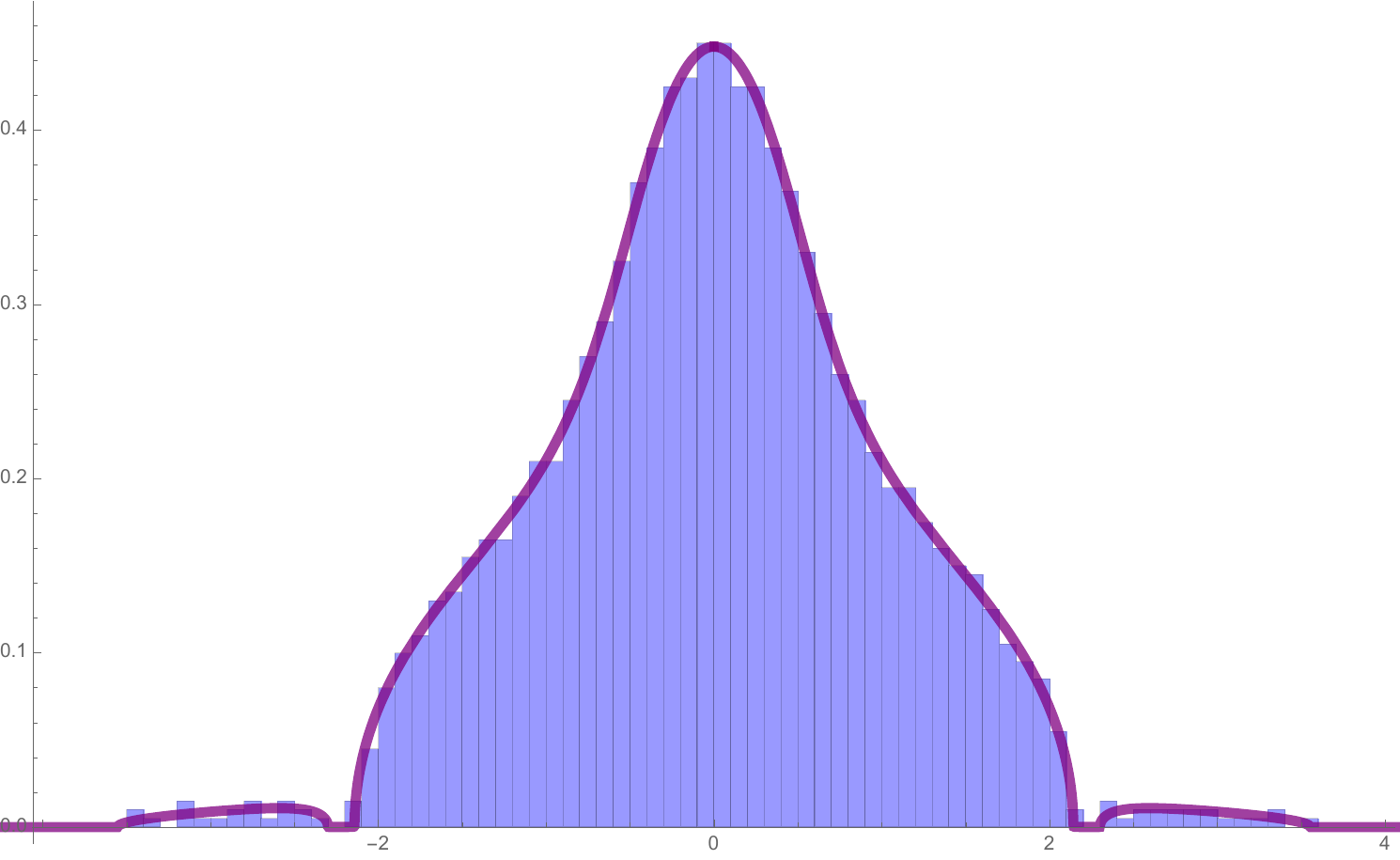}};
    \node (fig2) at (6.9cm,-0.38) {
	\includegraphics[width=0.3\textwidth]{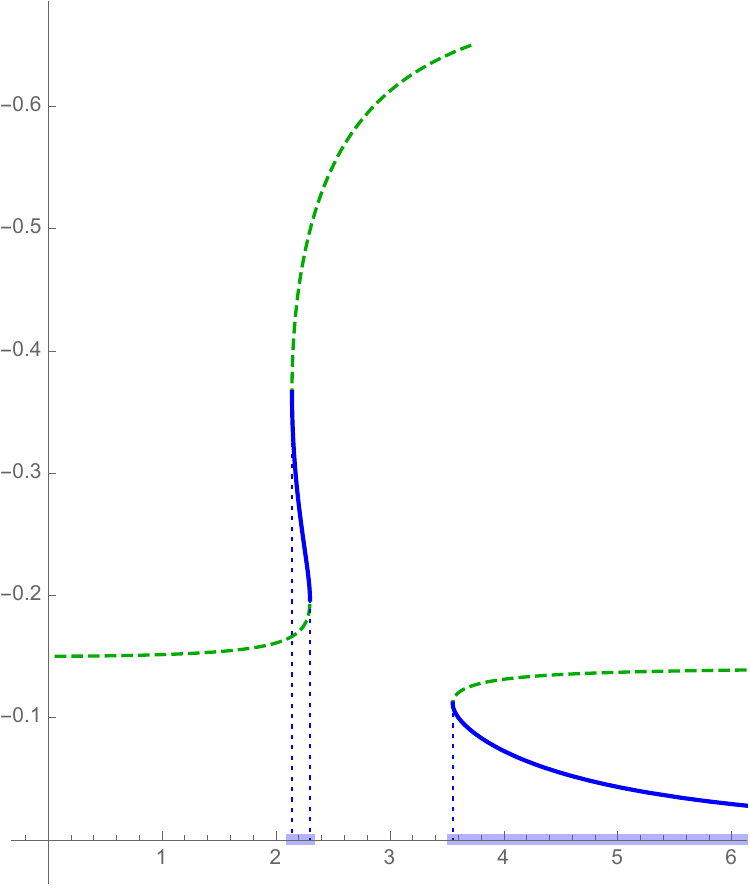}};
    \begin{scope}[shift={(fig2.south west)},x={(fig2.south east)},y={(fig2.north west)}]
      \node[font=\tiny] at (0.09,1) {$v$};
      \node[font=\tiny] at (1.05,0.065) {$\sqrt{\xi(v)}$};
      \node[font=\tiny,color=blue] at (0.77,0.505) {$\xi'(v)>0$};
      \draw[color=blue,thin,->] (0.63,0.5) -- (0.43,0.46) ;
      \end{scope}
  \end{tikzpicture}
\vspace{-0.2cm}
 \caption{Recovering the support of the limiting \abbr{esd}. Left: \abbr{esd} of the random multigraph on $n=1000$ vertices with degrees $\sqrt{n}$, $3\sqrt{n}$, $15\sqrt{n}$ in frequencies $0.5$, $0.49$, $0.01$, resp. Right: $\xi(v)$ from Remark~\ref{rem:pm-support}.}
  \label{fig:support_holes}
\vspace{-0.2cm}
\end{figure}

Recall \cite[Lemma 3.1, Lemma 3.2]{SiCh95}
that $h(z):=G_{\mu_\textsc{mp}}(z)$ is uniformly bounded
on $\CC_+$ away from the imaginary axis, and
\cite[Theorem~1.1]{SiCh95} that $h(z) \to h(x)$
whenever $z \in \CC_+$ converges to $x \in \RR \setminus \{0\}$.
Further, the $\CC_+$-valued function $h(x)$ is
continuous on $\RR \setminus \{0\}$ and the continuous
density
\begin{equation}\label{dfn:pm-density}
\brho_{\textsc{mp}} (x) := \frac{d\mu_{\textsc{mp}}}{dx} = \frac{1}{\pi}
\Im(h(x)) \,,
\end{equation}
is real analytic at any $x \ne 0$ where it is
positive.
The density $\widetilde \brho(x) = |x| \brho_{\textsc{mp}} (x^2)$
of $\widetilde \mu$ inherits these regularity properties.
Bounding $\widetilde \brho$ uniformly and analyzing the
effect of \eqref{eq:f-form} we next make similar conclusions
about the density $\brho(x)$ of $\mu$, now also at $x=0$.
\begin{proposition}\label{prop:density}
In the setting of Proposition~\ref{lem-mu-Stil}, for $x \ne 0$  there is  density
\begin{equation}\label{dfn:rho}
\brho(x) := \frac{d\mu}{dx} = - 2 \Re(h(x^2)) \widetilde \brho(x) \,,
\end{equation}
which is continuous, symmetric, and moreover real analytic where positive. The support of $\mu$
is $\supp(\mu) := \{ x \in \RR : \brho(x) > 0 \} = \supp(\widetilde \mu)$, which
up to the mapping $x \mapsto x^2$ further matches $\supp(\mu_{\textsc{mp}})$.
In addition $\pi \widetilde \brho(x) \le 1 \wedge (2/|x|)$,
$\pi \brho(x) \le (\EE \hat D^{-2})^{1/2} \wedge (4/|x|^{3})$
and
if $\nu_{\hat D}(\{0\})=0$ then
$\mu$ is absolutely continuous (i.e., $\mu(\{0\})=0$).
\end{proposition}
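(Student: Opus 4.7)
The plan is to apply the Stieltjes inversion formula to \eqref{eq:f-form} of Proposition~\ref{lem-mu-Stil}, and to read off the regularity, support, and bound statements from the resulting identity combined with the Silverstein--Choi regularity of $h = G_{\mu_{\textsc{mp}}}$ recalled just before the proposition.

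\emph{Density formula.} Since $\widetilde\mu$ is the symmetric push-forward of $\mu_{\textsc{mp}}$ under $x \mapsto \pm\sqrt{x}$, a direct calculation yields
\[
G_{\widetilde\mu}(z) \;=\; \tfrac12\int\!\Bigl[\tfrac{1}{\sqrt s - z}+\tfrac{1}{-\sqrt s - z}\Bigr]d\mu_{\textsc{mp}}(s) \;=\; z\,h(z^2)
\qquad\text{for } z\in\C_+,
\]
so that \eqref{eq:f-form} becomes $G_\mu(z) = -z^{-1} - z\,h(z^2)^2$. Fix $x \ne 0$ and let $z = x + i\epsilon \downarrow x + i 0$. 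The $-z^{-1}$ contribution to $\Im G_\mu$ vanishes, while the branch of $z \mapsto z^2$ sends $z^2$ into $\C_+$ when $x>0$ (so $h(z^2) \to h(x^2)$) and into $\C_-$ when $x<0$ (so $h(z^2) \to \overline{h(x^2)}$); in either case $\Im[-z\,h(z^2)^2] = -2|x|\,\Re h(x^2)\,\Im h(x^2)$, and using $\pi\widetilde\rho(x) = |x|\,\Im h(x^2)$ this is exactly \eqref{dfn:rho}.

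\emph{Regularity, symmetry and support.} Since the right-hand side of \eqref{dfn:rho} depends on $x$ only through $|x|$, $\rho$ is symmetric. For $x \ne 0$ both $h(x^2)$ and $\widetilde\rho(x)$ are continuous, and real analytic on $\{\Im h(x^2) > 0\}$ by Silverstein--Choi, so $\rho$ inherits these properties. Nonnegativity of $\pi\rho(x) = -2\,\Re h(x^2)\,\pi\widetilde\rho(x)$ forces $\Re h(x^2) \le 0$ on $\{\widetilde\rho>0\}$; to obtain strict negativity (and hence the set identity $S_\mu = S_{\widetilde\mu}$, which by definition of $\widetilde\mu$ is in turn the preimage of $S_{\mu_{\textsc{mp}}}$ under $x \mapsto x^2$), I would argue that $\Re h \equiv 0$ on an open subinterval would make $h$ purely imaginary there, and a Schwarz-reflection extension across such an interval would conflict with the standard Stieltjes reflection $h(\bar z)=\overline{h(z)}$ valid outside the support unless $h\equiv 0$.

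\emph{Bounds and atom at $0$.} Evaluating \eqref{eq:f-form} at $z = i\epsilon$, one has $G_{\widetilde\mu}(i\epsilon) = i\epsilon\,h(-\epsilon^2) \in i\R$, so
\[ \epsilon\,\Im G_\mu(i\epsilon) \;=\; 1 - \epsilon^2 h(-\epsilon^2)^2 . \]
Since $G_\mu$ is the Cauchy--Stieltjes transform of a probability measure, this quantity lies in $[0,1]$ for every $\epsilon > 0$ and its limit as $\epsilon \downarrow 0$ equals $\mu(\{0\})$. In particular $\epsilon\,h(-\epsilon^2) \le 1$, and I would upgrade this imaginary-axis bound to the uniform statement $\pi\widetilde\rho(x) \le 1$ by exploiting the structural identity $G_{\widetilde\mu}(z) = z\,h(z^2)$ together with the positivity of $\Im G_\mu$ on $\C_+$; the tail bound $\pi\widetilde\rho(x) \le 2/|x|$ follows from $\widetilde\rho(x) = |x|\rho_{\textsc{mp}}(x^2)$ and a Silverstein-type estimate of the form $t\,\rho_{\textsc{mp}}(t) \le 2/\pi$. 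Substituting these into \eqref{dfn:rho} and controlling $|\Re h(x^2)|$ by Cauchy--Schwarz against $\mu_{\textsc{mp}}$---using the size-biasing $d\nu/d\nu_{\hat D}(x) = x$ to rewrite the relevant moment integral as $\E[\hat D^{-2}]^{1/2}$---yields the two bounds on $\pi\rho(x)$. Finally, under $\nu_{\hat D}(\{0\})=0$ the size-biasing forces $\mu_{\textsc{mp}}$ to exhibit the near-zero asymptotics $\rho_{\textsc{mp}}(t) \sim (\pi\sqrt t)^{-1}$, so that $\epsilon\,h(-\epsilon^2) \to 1$ and hence $\mu(\{0\}) = 1 - \pi^2\widetilde\rho(0)^2 = 0$.

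The main obstacle is the uniform density bound $\pi\widetilde\rho(x) \le 1$: the direct Cauchy--Schwarz inequality $|G_{\widetilde\mu}(z)|^2\,\Im z \le \Im G_{\widetilde\mu}(z)$ degenerates at the real axis, so converting the imaginary-axis estimate $\epsilon\,h(-\epsilon^2) \le 1$ into a uniform real-axis statement requires carefully exploiting the specific subordination linking $\widetilde\mu$ to $\mu_{\textsc{mp}}$ together with the strict sign of $\Re h$ obtained above.
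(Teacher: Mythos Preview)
Your derivation of the density formula and its continuity/analyticity via Silverstein--Choi is correct and matches the paper. The gap is in everything quantitative: the sign of $\Re h$, the bounds on $\widetilde\rho$ and $\rho$, and $\mu(\{0\})=0$. The paper obtains all of these from a single source you never touch, namely the self-consistent equation $g=-\E[\hat D/(z+g\hat D)]$ for $g=G_{\widetilde\mu}$ (equivalently, the Marchenko--Pastur fixed-point equation for $h$). Writing $z=x+i\eta$, $g=-y+i\gamma$, $\hat W=|z+g\hat D|$, $A=\E[\hat D\hat W^{-2}]$, $B=\E[\hat D^2\hat W^{-2}]$, one reads off $y=Ax/(1+B)$ and $\gamma=A\eta/(1-B)$, forcing $0<B<1$ and hence $|g|\le\sqrt B\le 1$ by Jensen; this is the uniform bound $\pi\widetilde\rho\le 1$ you flag as the main obstacle. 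The same identities give $\Re h(x^2)=-A/(1+B)<0$ pointwise, $|g|\le\sqrt A\le 2/|x|$, and (via $f=-\E[(z+g\hat D)^{-1}]$ and Cauchy--Schwarz) $|f|\le\sqrt{B\,\E\hat D^{-2}}$. The no-atom claim comes from plugging $z=i\eta$ into the same equation and letting $\eta\downarrow 0$.

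Your proposed substitutes do not close these gaps. The Schwarz-reflection argument for $\Re h<0$ is incorrect as stated: on an interval inside the support the analytic continuation of $h$ from $\C_+$ across the axis is \emph{not} the Stieltjes reflection $\overline{h(\bar z)}$ (they differ by the density jump), so no contradiction arises; and even if it worked it would only exclude $\Re h\equiv 0$ on an interval, not at a point. The ``Silverstein-type estimate'' $t\rho_{\textsc{mp}}(t)\le 2/\pi$ is not a general fact about Marchenko--Pastur laws; in the paper it is again a consequence of the $A$--$B$ computation. And the asserted asymptotic $\rho_{\textsc{mp}}(t)\sim(\pi\sqrt t)^{-1}$ near $0$ is exactly what needs proving---it is equivalent to $\gamma(i\eta)\to 1$, which the paper gets directly from the fixed-point equation under $\nu_{\hat D}(\{0\})=0$. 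In short, the missing idea is to exploit the explicit fixed-point equation for $g$ rather than treating $\widetilde\mu$ as a generic symmetrized Marchenko--Pastur law.
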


\begin{figure}
\vspace{-0.2cm}
\centering
  \begin{tikzpicture}
    \node (fig1) at (0,0) {
	\includegraphics[width=0.5\textwidth]{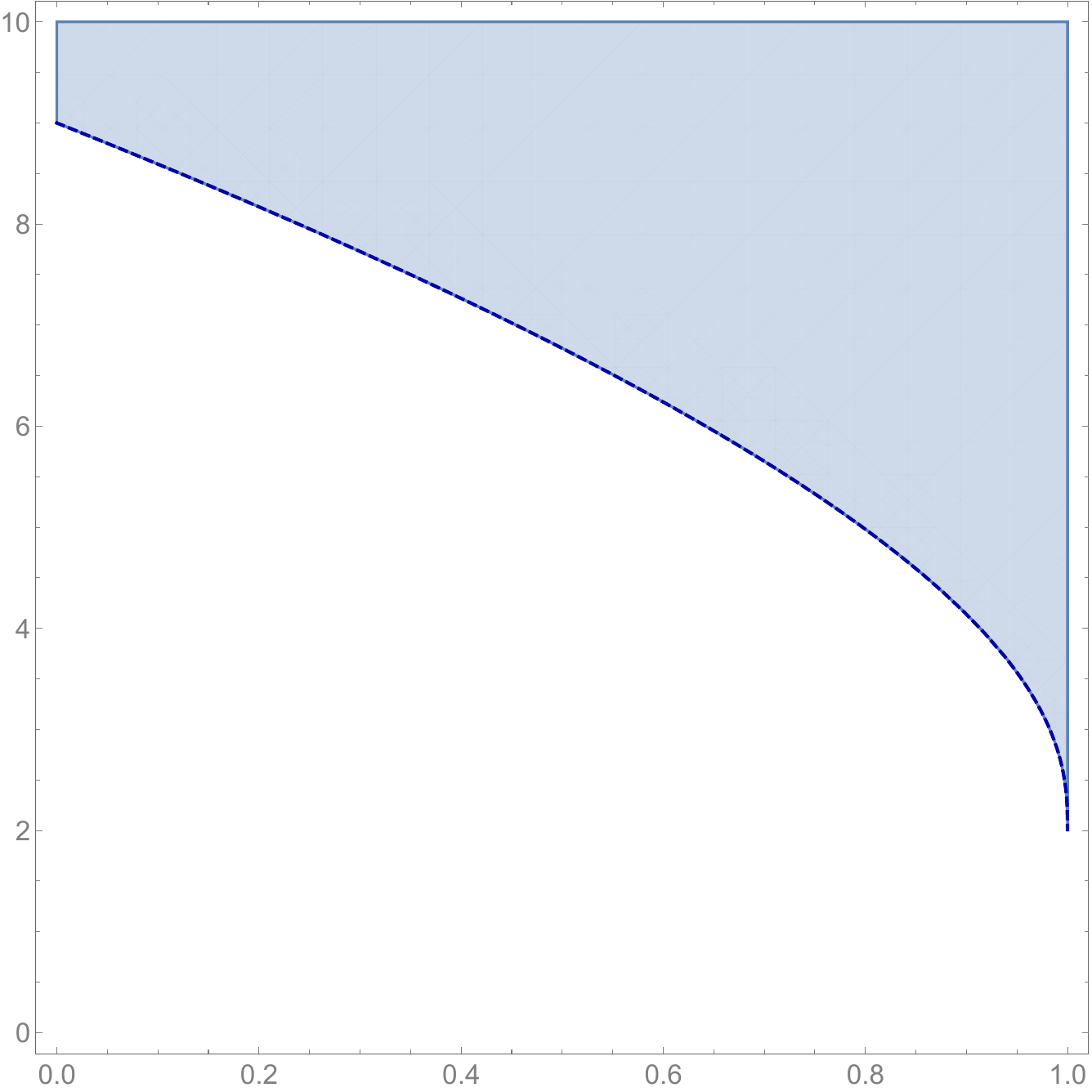}};
    \node (fig2) at (6.7cm,2.4) {
	\fbox{\includegraphics[width=0.23\textwidth]{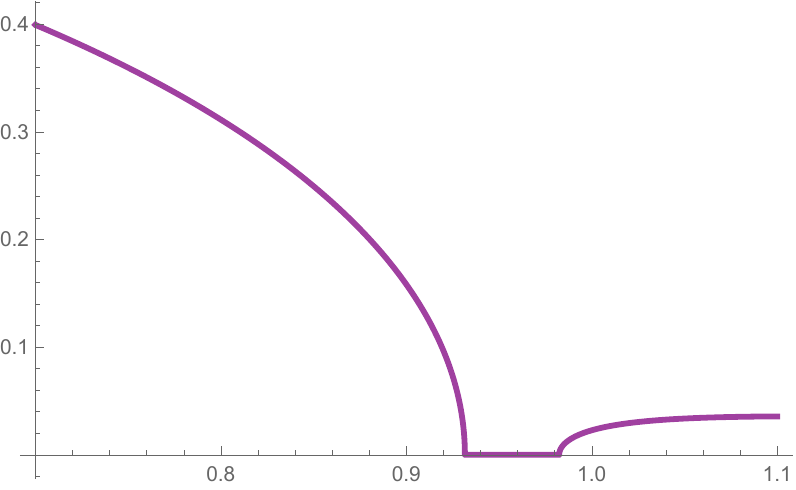}}};
    \node (fig3) at (6.7cm,0.1) {
	\fbox{\includegraphics[width=0.23\textwidth]{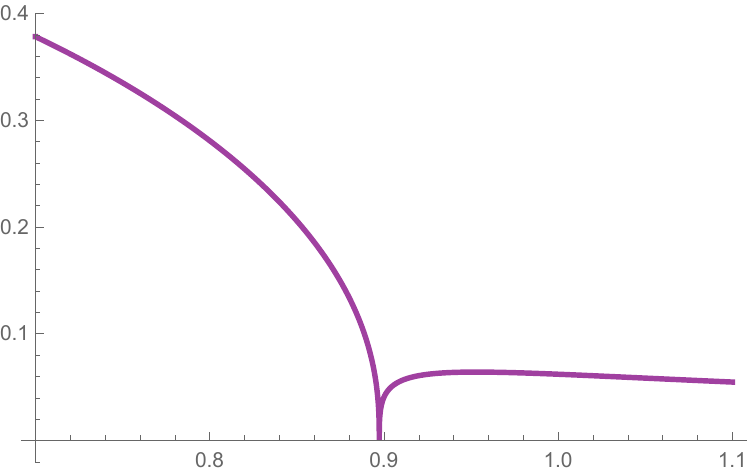}}};
    \node (fig4) at (6.7cm,-2.2) {
	\fbox{\includegraphics[width=0.23\textwidth]{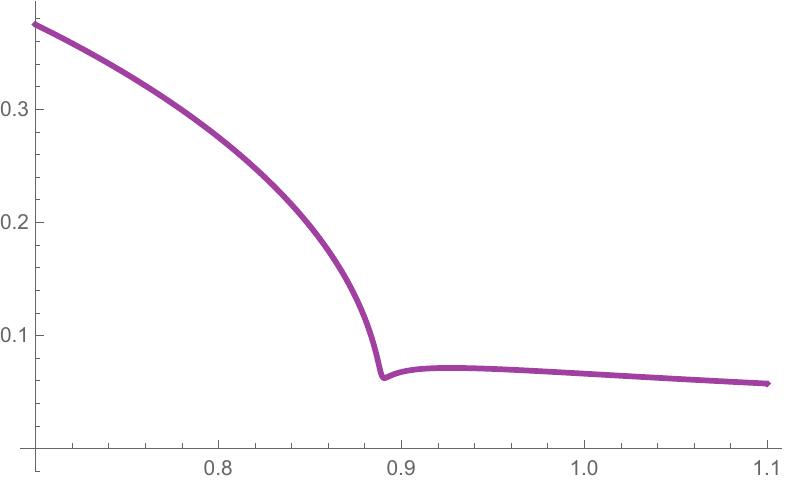}}};
    \begin{scope}[shift={(fig1.south west)},x={(fig1.south east)},y={(fig1.north west)}]
      \node[font=\tiny] at (0.05,1.01) {$\alpha$};
      \node[font=\tiny] at (1.015,0.045) {$\eta$};
      \draw[color=gray,fill=gray] (0.515,0.687) circle (0.008) node[above right] {};
      \draw[color=gray,thin,dashdotted,->] (0.515,0.687) -- (1.13,0.52);
      \draw[color=gray,fill=gray] (0.515,0.65) circle (0.008) node[above right] {};
      \draw[color=gray,thin,dashdotted,->] (0.515,0.65) -- (1.13,0.22);
      \draw[color=gray,fill=gray] (0.515,0.868 ) circle (0.008) node[above right] {};
      \draw[color=gray,thin,dashdotted,->] (0.515,0.868) -- (1.13,0.82);
      \end{scope}
  \end{tikzpicture}
 \caption{Phase diagram for the existence of holes in the limiting \abbr{esd} when $\nu_{\hat D}$ is supported on two atoms $\alpha>\eta>0$ (see Corollary~\ref{cor:two-atoms}). Left: the region~\eqref{eq-support-holes} (where $\supp(\mu)$ is not connected) highlighted in blue. Right: zooming in on the emergence of a hole as $\alpha$ varies at $\eta=\frac12$.}
  \label{fig:support_holes_2atoms}
\vspace{-0.3cm}
\end{figure}

\begin{remark}\label{rem:pm-support}
Recall the unique inverse of $h$ on $h(\CC_+)$ given by
\begin{align}\label{eq-h(z)-inv}
\xi(h) := - \frac{1}{h} + \EE\bigg[\frac{\hat D^2}{1 + h\hat D}\bigg] \,,
\end{align}
namely $\xi(h(z))=z$ on $\CC_+$ (see \cite[Eqn.~(1.4)]{SiCh95});
this inverse
extends analytically to a neighborhood of $\CC_+ \cup \Gamma$ for
$\Gamma:=\{ h \in \RR : h \ne 0, -h^{-1} \in \supp(\hat \nu_D)^c \}$
and~\cite[Theorems~4.1 and~4.2]{SiCh95} show that
$x \in \supp(\mu_{\textsc{mp}})^c$ iff $\xi'(v)>0$
for $v \in \Gamma$, where $v=h(x)$ and $x=\xi(v)$
(thus validating the characterization of $\supp(\mu_\textsc{mp})$
which has been proposed in \cite{MaPa67}).
We show in Lemma~\ref{lem:g-bdd} that $\Re(h(x^2))<0$ everywhere,
hence the behavior of $\brho(x)$ at the soft-edges of $\supp(\mu)$ can be
read from the soft-edges of $\supp(\mu_{\textsc{mp}})$ (as in~\cite[Prop. 2.3]{HHN16}), depicted in Figure~\ref{fig:support_holes_2atoms}.
\end{remark}

\begin{corollary}\label{cor:two-atoms}
Suppose $\nu_{\hat D}$ of mean one is
supported on two atoms $\alpha>\eta>0$. The support $\supp(\mu)$ of
$\mu = \nu_{\hat D} \boxtimes \sigma_{\semic}$ is then disconnected iff
\begin{equation}\label{eq-support-holes}
\alpha > \eta \Big[ \frac{3}{1-(1-\eta)^{1/3}} - 1 \Big] \,.
\end{equation}
Moreover, when~\eqref{eq-support-holes} holds, $\supp(\mu)\cap \RR_+$ consists of exactly two disjoint intervals.
\end{corollary}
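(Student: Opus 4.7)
The plan is to combine Proposition~\ref{prop:density} (which reduces the study of $S_\mu$ to that of $S_{\mu_\textsc{mp}}$) with the Silverstein--Choi characterization in Remark~\ref{rem:pm-support}. Since $S_\mu \cap \R_+$ is the image of $S_{\mu_\textsc{mp}}$ under the monotone map $x \mapsto \sqrt{x}$, their component structures on $(0,\infty)$ agree; it therefore suffices to prove that $S_{\mu_\textsc{mp}} \cap (0,\infty)$ is connected unless~\eqref{eq-support-holes} holds, and is a union of exactly two intervals when it does.

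Writing $\nu_{\hat D} = p\,\delta_\alpha + (1-p)\,\delta_\beta$, the constraint $\E\hat D = 1$ forces $p = (1-\beta)/(\alpha-\beta)$ and $1-p = (\alpha-1)/(\alpha-\beta)$ (in particular $0<\beta<1<\alpha$). Then
\[
\xi(v) \,=\, -\frac{1}{v} + \frac{p\alpha^{2}}{1+\alpha v} + \frac{(1-p)\beta^{2}}{1+\beta v}, \qquad v \in \Gamma := \R \setminus \{0,-1/\alpha,-1/\beta\}.
\]
The identity $\xi(v) = -\tfrac{1}{v}\,\E_{\nu_{\hat D}}\bigl[\hat D/(1+v\hat D)\bigr]$ (which uses $\E\hat D=1$) shows $\xi(v)<0$ on $(-\infty,-1/\beta)\cup(0,\infty)$, so neither interval can produce holes in $S_{\mu_\textsc{mp}}\subseteq[0,\infty)$. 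On $(-1/\alpha,0)$ the function $\xi$ blows up to $+\infty$ at both endpoints (yielding, via $\xi'>0$ near $0^-$, only the right edge $(x^*,\infty)$ of $S_{\mu_\textsc{mp}}^c$), while on $(-1/\beta,-1/\alpha)$ it decreases overall from $+\infty$ to $-\infty$. Consequently $S_{\mu_\textsc{mp}}\cap(0,\infty)$ is disconnected iff $\xi$ is non-monotone on $(-1/\beta,-1/\alpha)$, i.e., iff $\xi'(v)>0$ at some interior point there.

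Under the substitution $w = -1/v \in (\beta,\alpha)$ a short computation gives $\xi'(v) = w^{2}(1-f(w))$, where
\[
f(w) \,:=\, \frac{p\alpha^{3}}{(\alpha-w)^{2}} + \frac{(1-p)\beta^{3}}{(w-\beta)^{2}}
\]
is strictly convex on $(\beta,\alpha)$ and blows up at both endpoints. Cube-rooting $f'(w)=0$ yields a unique minimizer and minimum value
\[
w^* = \frac{\alpha\beta\bigl(p^{1/3}+(1-p)^{1/3}\bigr)}{p^{1/3}\alpha+(1-p)^{1/3}\beta},\qquad f(w^*) = \frac{\bigl(p^{1/3}\alpha+(1-p)^{1/3}\beta\bigr)^{3}}{(\alpha-\beta)^{2}},
\]
after evaluating $\alpha-w^*$ and $w^*-\beta$. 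So the disconnectedness criterion is $f(w^*)<1$. Setting $s := (1-\beta)^{1/3}$ and $u := (\alpha-1)^{1/3}$ and substituting for $p$, $1-p$, this inequality reduces to $\alpha s + \beta u < \alpha-\beta$; using $\beta = 1-s^{3} = (1-s)(1+s+s^{2})$ and $\alpha = 1+u^{3}$, the borderline equation factors as $u = 1+s$, equivalently $\alpha = 1+(1+s)^{3} = (1+s+s^{2})(2+s)$, and the inequality rearranges to $\alpha(1-s)>\beta(2+s)$---which is precisely~\eqref{eq-support-holes} after dividing by $1-s$.

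For the ``exactly two intervals'' assertion, clear denominators to form $P(v):=\xi'(v)\,v^{2}(1+\alpha v)^{2}(1+\beta v)^{2}$; its leading $v^{4}$ coefficient $\alpha^{2}\beta^{2}(1-p\alpha-(1-p)\beta)$ vanishes by $\E\hat D=1$, so $P$ has degree at most $3$ and hence at most three real roots. In the disconnected regime three such roots are already present---one local minimum of $\xi$ on $(-1/\alpha,0)$ and a local max/min pair on $(-1/\beta,-1/\alpha)$---so $\xi$ has no additional extrema, and $\xi\bigl((-1/\beta,-1/\alpha)\bigr)$ contains exactly one ``hole'' interval inside $\R_+$. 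Together with the right-edge interval contributed by $(-1/\alpha,0)$, this gives $S_{\mu_\textsc{mp}}^c\cap(0,\infty)$ exactly two components, as claimed. The main obstacle I anticipate is the algebraic simplification of $f(w^*)<1$ into the clean form~\eqref{eq-support-holes} (essentially the identity $(1+s+s^{2})(2+s) = 1+(1+s)^{3}$), and, more conceptually, the careful component-by-component boundary analysis confirming that only the interval $(-1/\beta,-1/\alpha)$ can produce holes in $S_\mu$ on~$\R_+$.
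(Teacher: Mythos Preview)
Your argument is correct and reaches the same conclusion as the paper, but via a genuinely different route. The paper clears denominators in $\xi'(-v)$ to obtain an explicit cubic $P(v)=av^3+bv^2+cv+d$, verifies Descartes' rule of signs, and then factors the discriminant $D(P)$ to obtain the criterion~\eqref{eq-support-holes}. You instead substitute $w=-1/v$, write $\xi'(v)=w^2(1-f(w))$ for the strictly convex $f$, and minimize $f$ explicitly on $(\beta,\alpha)$; the criterion $f(w^*)<1$ then reduces, through the cube-root variables $s=(1-\beta)^{1/3}$ and $u=(\alpha-1)^{1/3}$, to $u>1+s$, which you correctly identify with~\eqref{eq-support-holes} via $(1+s+s^2)(2+s)=1+(1+s)^3$. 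Your approach is more conceptual (a one-variable convex minimization rather than a discriminant computation) and makes the role of the size-biased weights transparent; the paper's discriminant route is more mechanical but yields the factorization $D(P)=4q(1-q)(\alpha-\beta)^2(\alpha B-qA)$ in one stroke. For the ``exactly two intervals'' assertion both arguments ultimately rest on the same fact, that $P$ is cubic (your observation that the $v^4$-coefficient vanishes by $\E\hat D=1$ is exactly what the paper uses implicitly).

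Two small points of presentation. First, your sentence ``the inequality rearranges to $\alpha(1-s)>\beta(2+s)$'' reads as if you are substituting the borderline value $u=1+s$ into a strict inequality; it is cleaner to note (as you effectively do) that $\alpha(1-s)>\beta(1+u)$ is equivalent to $(u+s)(u-s-1)>0$, hence to $u>1+s$, hence to $\alpha>1+(1+s)^3=(1+s+s^2)(2+s)$, which is~\eqref{eq-support-holes} after multiplying by $(1-s)$. Second, your reduction in the opening paragraph tacitly assumes the left edge of $S_{\mu_{\textsc{mp}}}$ is at $0$ (so that ``$S_\mu$ disconnected'' coincides with ``$S_{\mu_{\textsc{mp}}}\cap(0,\infty)$ disconnected''); this follows from your own interval analysis (the only positive values in $S_{\mu_{\textsc{mp}}}^c$ arise from $(-1/\alpha,0)$ and possibly $(-1/\beta,-1/\alpha)$, forcing $\xi(v_{\min})>0$ strictly), and the paper's proof leaves this equally implicit.
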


\section{Convergence of the ESD's}\label{s:conv}
The proof of Theorem~\ref{thm:1} will use the following standard lemma.
\begin{lemma}\label{lem:truncation}
Let $\{ \bM_{n,r}\}_{n,r\in\NN }$ be a family of matrices of order $n$, define $\mu_{n,r} = \cL^{\bM_{n,r}}$
and $\eta(r) := \limsup_{n\to\infty} \tfrac1n \tr\left((\bM_{n,r} - \bM_{n,\infty})^2\right)$.
Let $\{\mu_r : r\in \NN \}$ denote a family of measures such that
\begin{align}
\label{it-truncation-limit} &\mu_{n,r} \Rightarrow \mu_r\mbox{ as $n\to\infty$ for every $r\in\NN$}\,,\\
\label{it-truncation-tightness} &\mu_{n,\infty}\mbox{ is tight}\,,\\
\label{it-truncation-frob}
&  \eta(r) \to 0 \quad\mbox{ as $r\to\infty$}\,.
 \end{align}
Then the weak limit of $\mu_{n,\infty}$ as $n\to\infty$ exists and equals $\lim_{r\to\infty} \mu_r$.
\end{lemma}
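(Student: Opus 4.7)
The plan is to turn the hypothesis $\eta(r)\to 0$ into a quantitative Lévy-distance bound between $\mu_{n,r}$ and $\mu_{n,\infty}$, and then extract a weak limit by a routine triangle-inequality argument, with tightness supplying the missing ingredient of mass conservation.

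First I would invoke the standard consequence of the Hoffman--Wielandt inequality, namely that for any two symmetric matrices $A,B$ of order $n$ the Lévy distance between their \abbr{esd}'s satisfies
\[
L(\cL^A,\cL^B)^3 \;\leq\; \tfrac1n \tr\bigl((A-B)^2\bigr).
\]
Applied to $A=M_{n,r}$ and $B=M_{n,\infty}$, taking $\limsup_n$ on both sides and invoking \eqref{it-truncation-frob} yields a rate $\delta(r):=\eta(r)^{1/3}\to 0$ with $\limsup_{n\to\infty} L(\mu_{n,r},\mu_{n,\infty})\leq\delta(r)$ for each $r\in\N$.

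Next, combining this with \eqref{it-truncation-limit}, which gives $L(\mu_{n,r},\mu_r)\to 0$ as $n\to\infty$, a four-term triangle inequality through $\mu_{n,r}$, $\mu_{n,\infty}$, $\mu_{n,s}$ produces $L(\mu_r,\mu_s)\leq\delta(r)+\delta(s)$ for all $r,s\in\N$. Hence $\{\mu_r\}$ is Cauchy in the Lévy metric and therefore converges to a (possibly defective) limit $F_\infty$. Likewise, any weak subsequential limit $\tilde\mu$ of $\{\mu_{n,\infty}\}$, which exists by \eqref{it-truncation-tightness}, satisfies $L(\tilde\mu,\mu_r)\leq\delta(r)$ for each $r$, and letting $r\to\infty$ forces $\tilde\mu=F_\infty$. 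Since every subsequential weak limit is a \emph{probability} measure, so is $F_\infty$, and uniqueness of the subsequential limit together with tightness yields $\mu_{n,\infty}\Rightarrow F_\infty = \lim_{r\to\infty} \mu_r$, as desired.

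The single nonroutine ingredient is the Lévy bound from Hoffman--Wielandt; everything else is a standard diagonal argument. The hypothesis I expect to demand the most care is \eqref{it-truncation-tightness}: without it, the Cauchy sequence $\{\mu_r\}$ could converge in the Lévy metric to a sub-probability measure (mass escaping to infinity), and one would have no way to identify the Lévy-limit with an actual weak limit of $\{\mu_{n,\infty}\}$. In the applications of the lemma, verifying tightness of the $\mu_{n,\infty}$ is precisely what ties the truncation scheme to the law one ultimately wishes to identify.
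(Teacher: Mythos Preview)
Your proof is correct and follows essentially the same route as the paper's: both invoke the Hoffman--Wielandt inequality to convert the trace bound into a metric bound between \abbr{esd}'s, then use a triangle inequality together with tightness to identify the limit. The only cosmetic difference is that the paper works with the bounded-Lipschitz metric (obtaining $d_{\textsc{bl}}(\cL^A,\cL^B)^2\le \tfrac1n\tr((A-B)^2)$ via Cauchy--Schwarz) and argues directly that any limit point $\mu_\infty$ of $\mu_{n,\infty}$ satisfies $d_{\textsc{bl}}(\mu_\infty,\mu_r)^2\le\eta(r)$, whereas you use the L\'evy metric with the cube-root bound and first establish that $\{\mu_r\}$ is Cauchy; both routes are standard and equivalent here.
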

\begin{proof}

Let $\mu_\infty$ be a limit point of $\mu_{n,\infty}$, the existence of which is guaranteed by the tightness assumption~\eqref{it-truncation-tightness}.
A standard consequence of the Hoffman--Wielandt bound (cf.~\cite[Lemma~2.1.19]{AGZ10}) and Cauchy--Schwarz is that for   matrices $\bA$ and $\bB$ of order $n$,
\[ d_{\textsc{bl}}\left(\cL^{\bA} , \cL^{\bB} \right)^2\leq  \frac1n \tr \left( (\bA-\bB)^2\right)\,,\]
where $d_{\textsc{bl}}$ is the bounded-Lipchitz metric on the space $M_1(\RR_+)$ of probability measures on $\RR_+$
(see the proof of~\cite[Theorem~2.1.21]{AGZ10}).
Thus, by~\eqref{it-truncation-limit} and the triangle-inequality for $d_{\textsc{bl}}$, it follows that
\[ \eta(r) \geq d_{\textsc{bl}}(\mu_\infty,\mu_r)^2\,.
\]
Consequently, $\mu_r \to \mu_\infty$ as $r\to\infty$, from which the uniqueness of $\mu_\infty$ also follows.
\end{proof}

\begin{proof}[\textbf{\emph{Proof of Theorem~\ref{thm:1}}}]
{\bf Step I} will reduce the proof to treating the
\emph{single-adjacency} matrix $\bA_n$ of $\sfG_n$, where multiple copies of an
edge/loop are replaced by a single one (that is, $\bA_n = \bA_{\sfG_n} \wedge 1$
entry-wise), and further the collection $\{\omega_n^{-1} D^{(n)}_i\}$
is a fixed finite set $\cS$. Scaling $\hat{\bA}_n := \omega_n^{-1/2} \bA_n$
we rely in {\bf Step II}, on Proposition~\ref{prp:coup} to replace
the limit points of $\cL^{\hat \bA_n}$ by those of $\cL^{\omega_n^{-1/2} \widetilde \bA_n}$ for symmetric matrices $\widetilde \bA_n$ of independent Bernoulli entries,
using the moment method in {\bf Step III} to relate the latter to the limit of
$\cL^{\bB_n}$ for the
matrices $\bB_n$ of Remark~\ref{rem:goe}.

\medskip \noindent
{\bf Step I.}
We claim that if
$\cL^{\hat \bA_n} \Rightarrow \mu$
in probability, then the same applies for $\cL^{\hat \bA_{\sfG_n}}$.
This will follow from Lemma~\ref{lem:truncation} with $\bM_{n,r} = \hat \bA_{n}$
and $\bM_{n,\infty} = \hat \bA_{\sfG_n}$ upon verifying that
\begin{equation}\label{eq-xi-limit}
\xi_n :=
\EE \left[\tfrac1n \tr\left(\big(\hat \bA_{\sfG_n}-\hat \bA_{n}\big)^2\right) \right]
\to 0 \,.
\end{equation}
Indeed, condition~\eqref{it-truncation-limit} has been assumed;
condition~\eqref{it-truncation-tightness} follows from the fact that
\[ \frac1{2n} \tr\left(\hat \bA_{\sfG_n}^2\right) \leq \frac1n\tr\left(\big(\hat \bA_{\sfG_n}-\hat \bA_n\big)^2\right) + \frac1n \tr(\hat \bA_n^2) \leq \frac1n \tr\left(\big(\hat \bA_{\sfG_n}-\hat \bA_n\big)^2\right) + \frac{
2|\sfE_n|}{n\omega_n}\,,\]
so in particular $\EE [\frac1n \tr(\hat \bA_{\sfG_n}^2) ]\leq \xi_n + 1+o(1) $, yielding tightness;
and condition~\eqref{it-truncation-frob} holds in probability by~\eqref{eq-xi-limit} and Markov's inequality.
Recall that the stochastic
ordering $X \preceq X'$ denotes that $\PP(X > x) \le \PP(X' > x)$ 
for all $x \in \RR$, or equivalently, that there exists a coupling
of $(X,X')$ such that $\PP(X \le X')=1$.
To establish~\eqref{eq-xi-limit}, 
observe that, for every $i$ and $j$ we have
$(\bA_{\sfG_n})_{i,j} \preceq \Bin(m, q) $ for $m = D_i^{(n)}$ and
$q = (D_j^{(n)}-1_{i=j})/(2|\sfE_n|-1)$, whereas $\Bin(m,q) \preceq Y_\lambda \sim \PPo( \lambda)$ for every $m$ and $\lambda$ such that $1-q \geq e^{-\lambda/m}$. Thus,
\[ \EE\left[(\bA_{\sfG_n}-\bA_n)_{i,j}^2\right] \leq \EE\left[ (Y_\lambda-1)_+^2\right] \leq \lambda^2 \,.\]
Since $q \le \frac{1+o(1)}{\omega_n}$ uniformly over $i,j$, we
take \abbr{wlog} $\lambda = m D_j^{(n)}/|\sfE_n|$,
yielding for $n$ large
\[
\xi_n \leq \frac{2}{n \omega_n} \sum_{i,j=1}^n
\Big[\frac{D_i^{(n)} D_j^{(n)}}{|\sfE_n|}\Big]^2
\le \frac{4\omega_n}{n} \Big[ \frac1n \sum_{i=1}^n (\hat D_i^{(n)})^2 ]
\to 0 \,,
 \]
by our assumption that
$\EE [( \hat D_{U_n}^{(n)} )^2] = o(\sqrt{n/\omega_n})$. Considering hereafter
only single-adjacency matrices, we proceed to reduce
the problem to the case where the variables $\hat D_i^{(n)}$ are all supported
on a finite set. To this end, let $\ell = 2r^2$ for $r \in \NN$ and
\[ \hat D_i^{(n,r)} = \Psi_r(\hat{D}_i^{(n)}) \quad\mbox{ for }\quad \Psi_r(x) := \sum_{j=1}^\ell d_j^{(r)} \one_{\left[d_j^{(r)},d_{j+1}^{(r)}\right)}(x)\,,
 \]
where $0=d_1^{(r)}<\ldots<d_{\ell+1}^{(r)}$ are continuity points of $\nu_{\hat D}$ of interdistances in $[\frac1{2r},\frac1r]$,
which are furthermore in $\epsilon_r\ZZ$ for some irrational $\epsilon_r>0$.
Let
\[ D_i^{(n,r)} = \omega_{n,r} \hat{D}_i^{(n,r)} \in \ZZ_+ \quad\mbox{ for }\quad \omega_{n,r} := \frac{[\epsilon_r \omega_n ] }{\epsilon_r}\,,\]
 possibly deleting one half-edge from $D_n^{(n,r)}$ if needed to make $\sum_{i=1}^n D_i^{(n,r)}$ even.

\begin{observation}\label{obs:coupling}
Let $\{d_i\}_{i=1}^n,\{d'_i\}_{i=1}^n$ be degree sequences with $d'_i\leq d_i$, and let $\sfG$ be a random multigraph with degrees $\{d_i\}$ generated by the configuration~model. Generate $\sfH$ by
(a) marking a subset of $d'_i$ half-edges of vertex $i$ \textsc{blue}, chosen independently of the matching 
that generated $\sfG$; (b) retaining every edge that has two \textsc{blue} endpoints; and (c) adding an independent uniform matching on all other \textsc{blue} half-edges. Then $\sfH$ has the law of the random multigraph with degrees $\{d'_i\}$ generated by the configuration~model.
\end{observation}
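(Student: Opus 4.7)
The plan is to verify the observation by analyzing the coupling in the reverse (``deletion'') direction, which is cleaner. I would first sample $\widetilde H$ as a uniformly random matching of the $\sum_i d_i'$ half-edges (so $\widetilde H$ has the configuration-model law with degrees $\{d_i'\}$), and, independently at each vertex $i$, mark a uniformly chosen size-$d_i$ subset $B_i$ of the $d_i'$ half-edges as \textsc{blue}. Define $\widetilde G$ on $B := \bigcup_i B_i$ by retaining every edge of $\widetilde H$ with both endpoints in $B$ and then uniformly re-matching the remaining (``stray'') blue half-edges whose $\widetilde H$-partner lies outside $B$. My target is to prove that, conditional on $B$, $\widetilde G$ is uniformly distributed on matchings of $B$, i.e., has the configuration-model law with degrees $\{d_i\}$. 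This realizes the joint law asserted by the observation, since the forward sampling (a)--(c) is then the conditional distribution of $\widetilde H$ given $\widetilde G = G$ under the same coupling.

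The core computation decomposes, for any fixed matching $m$ of $B$, over the ``retained'' sub-matching $M \subseteq m$:
\begin{align*}
P(\widetilde G = m \mid B) = \sum_{k \geq 0} \binom{|B|/2}{k}\, C(k)\, \frac{1}{(|B|-2k-1)!!} \,,
\end{align*}
where $\binom{|B|/2}{k}$ counts the sub-matchings $M \subseteq m$ of size $k$, the factor $1/(|B|-2k-1)!!$ is the probability that a uniform re-match of the $|B|-2k$ stray blue half-edges produces $m\setminus M$, and $C(k) := P(\widetilde H\text{'s blue-blue matching on } B \text{ equals } M \mid B)$ for \emph{any} fixed $M$ of size $k$. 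The critical step is to show that $C(k)$ depends only on $k$, not on the particular $M$: writing $N := \sum_i d_i'$ and using exchangeability of the uniform matching, $P(\widetilde H \supseteq M) = (N-2k-1)!!/(N-1)!!$, while conditional on $\widetilde H \supseteq M$ the probability that the remaining $N-2k$ half-edges carry no further blue-blue edge depends only on the counts $|B|-2k$ (stray blue) and $N-|B|$ (non-blue). Hence the right-hand side is independent of $m$, and since the $(|B|-1)!!$ matchings of $B$ together carry unit probability, each has probability $1/(|B|-1)!!$, as required.

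The hardest step will be the exchangeability argument ensuring $C(k)$ is a function of $k$ alone; this reduces to the invariance of the uniform matching on $N$ half-edges under permutations fixing $B$ and its complement setwise. Once this is in place, recovering the forward prescription (a)--(c) from the inverse of the coupling is a routine unpacking: sample $B$ uniformly, sample for each edge of $G$ whether it is ``retained'' as a blue-blue pair of $\widetilde H$ or ``broken'' with its two endpoints paired to non-blue half-edges of $\widetilde H$ and reconstituted at the uniform re-matching step, and finally complete $\widetilde H$ by a uniform matching on the remaining non-blue half-edges.
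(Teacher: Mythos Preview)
Your central exchangeability computation is correct and is a more explicit rendering of the paper's one-sentence proof: since the coloring is independent of the uniform matching, the induced partial matching on those blue half-edges that are matched to blue counterparts is conditionally uniform (given $B$ and given which blue half-edges are so matched), and the independent uniform re-matching of the stray blue half-edges then makes the full matching on $B$ uniform. Your sum over $k$ and the verification that $C(k)$ depends only on $k$ simply unpack this.

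Where you go astray is in the framing, and the cause is a typo in the stated observation: with $d_i\le d_i'$ and $G$ of degrees $\{d_i\}$, step~(a) is impossible as written. The paper's parenthetical proof and its immediate application (where $D_i^{(n,r)}\le D_i^{(n)}$ and the already-constructed $G_n$ carries the larger degrees) make clear that the intended reading gives $G$ the \emph{larger} degree sequence and $H$ the \emph{smaller}; (a)--(c) is the deletion procedure. Hence what you call the ``reverse (deletion)'' direction \emph{is} the observation, and your argument already proves it. The inversion in your final paragraph --- a putative sampling of $\widetilde H$ from $\widetilde G$ --- is therefore unnecessary; moreover the procedure you sketch there is not (a)--(c), and you assert rather than prove that it gives the correct conditional law. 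You should simply drop that step.
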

\begin{proof}
Since the configuration model matches the half-edges in $\sfG$ via a uniformly chosen perfect matching, and the coloring step (a) is performed independently of this matching, it follows that the induced matching on the subset of \textsc{blue} half-edges that are matched to \textsc{blue} counterparts---namely, the edges retained in step (b)---is uniform.
\end{proof}

Using this, and noting
$D_i^{(n,r)} \leq D_i^{(n)}$ for all $i$, let $\sfG_n^{(r)}=([n],\sfE_n^{(r)})$ be the following random multigraph with degrees $\{ D_i^{(n,r)}\}$, coupled to the already-constructed~$\sfG_n$:
\begin{enumerate}[(a)]
\item For each $i$, mark a uniformly chosen subset of $D_i^{(n,r)}$ half-edges incident to vertex~$i$ as \textsc{blue} in $\sfG_n$.
\item Retain in $\sfG_n^{(r)}$ every edge of $\sfE_n$ where both parts are \textsc{blue}.
\item Complete the construction of $\sfG_n^{(r)}$ via a uniformly chosen matching of all unmatched half-edges.
\end{enumerate}

Let $\hat \bA_n^{(r)} = \omega_n^{-1/2} \bA_n^{(r)}$ for
$A_n^{(r)}$, the single-adjacency matrix of $\sfG_n^{(r)}$.
We next control the difference between $\cL^{\hat \bA_n}$ and $\cL^{\hat \bA_n^{(r)}}$.
Indeed, by the definition of the coupling of $\sfG_n$ and $\sfG_n^{(r)}$,  the cardinality of the symmetric $ \sfE_n\xor \sfE_n^{(r)}$ is at most twice the number of \emph{unmarked} half-edges in $\sfG_n$.
Thus,
\begin{align}  \frac1{4 n} \tr\big( (\hat \bA_n - \hat \bA_n^{(r)})^2\big) &\leq \frac1{2n\omega_n } \left|\sfE_n \xor \sfE_n^{(r)}\right|
\leq \frac{1}{n\omega_n} \sum_{i=1}^n (D_i^{(n)} - D_i^{(n,r)}) \nonumber \\
&\leq  \frac{1+o(1)}{\epsilon_r \omega_n} + \frac1r +
\frac{1}{n} \sum_{i=1}^n \hat D_i^{(n)} \one_{\{\hat D_i^{(n)} \ge r\}}
=: \eta(n,r)\,,
\label{eq:Dn-Dnr}
\end{align}
where the first term in $\eta(n,r)$  accounts for the discrepancy between
$\omega_n$ and $\omega_{n,r}$, the  term $1/r$ accounts for the degree quantization, while
the last term accounts for degree truncation
(since $d_{\ell+1}^{(r)}\geq r$). Thanks to the uniform integrability
of $\{\hat D^{(n)}_{U_n} \}$ from~\eqref{eq-Dn-moment}, we have that $\eta(r) := \limsup_{n\to\infty} \eta(n,r)$ satisfies $ \eta(r) \to 0$ as $r\to\infty$.
Furthermore,
\[ \int x^2 d\cL^{\hat \bA_n} = \frac1n \tr(\hat \bA_n^2) \leq 1+o(1)\] by the choice of $\omega_n$ in~\eqref{eq-omega-choice}, yielding the tightness of $\mu_{n,\infty} := \cL^{\hat \bA_n}$.
Altogether, we conclude from Lemma~\ref{lem:truncation} that, if $\cL^{\hat \bA_n^{(r)}}\Rightarrow \mu_r$, then $\cL^{\hat \bA_n}\Rightarrow \lim_{r\to\infty}\mu_r$.

Next, let $\omega_n^{(r)}=2 |\sfE_n^{(r)}|/n$
(as in \eqref{eq-omega-choice} but for the multigraph $\sfG_n^{(r)}$).
Since (see~\eqref{eq:Dn-Dnr}),
\[ \limsup_{n\to\infty} \bigg| 1 - \frac{\omega_n^{(r)}}{ \omega_n}\bigg| \leq \eta(r)\to 0\quad\mbox{ as $r\to\infty$}\,,\]
\abbr{wlog} we  replace $\omega_n$ by $\omega_n^{(r)}$ in the definition of $\hat \bA_n^{(r)}$, i.e., starting with
\[ \hat D_i^{(n,r)} \in \{d_1^{(r)},\ldots,d_\ell^{(r)}\} =: \cS_r\,.\]
Further, note that the hypothesis $\cL^{\hat \bLambda_n}\Rightarrow \nu_{\hat D}$ as $n\to\infty$, together with our choice of  $\cS_r$, implies that $\cL^{\hat\bLambda_n^{(r)}}$ (corresponding to $\hat\bLambda^{(r)}_n=\mathrm{diag}(\hat D^{(n,r)}_1,\ldots,\hat D^{(n,r)}_n)$)
converges weakly for each $r$ to some $\nu_{\hat D_r} \ne \delta_0$,
supported on $\RR_+$, and further,
$\nu_{\hat D_r}\Rightarrow \nu_{\hat D} \ne \delta_0$, as $r\to\infty$.

Let $\mu^{(2)}$ denote hereafter the pushforward of the measure
$\mu$ by the mapping $x \mapsto x^2$ (that is, the measure given by $B\mapsto  \mu(f^{-1}(B))$ for $f(x)=x^2$.)
 It is known that, for probability measures on $\RR_+$, the free multiplicative convolution is continuous w.r.t.\ weak convergence; that is,
$\nu_k \boxtimes \nu'_k \Rightarrow \nu \boxtimes \nu'$ provided
$\nu_k \Rightarrow \nu \ne \delta_0$, $\nu'_k \Rightarrow \nu' \ne \delta_0$
all of which are supported on $\RR_+$ (see, \emph{e.g.},~\cite[Prop. 3]{ArPe09}).
Applying this twice, we find that
\begin{equation}\label{eq:lim-sq}
\nu_{\hat D_r} \boxtimes \sigma_{\semic}^{(2)} \boxtimes \nu_{\hat D_r}
\Rightarrow \nu_{\hat D}\boxtimes \sigma_{\semic}^{(2)} \boxtimes \nu_{\hat D} \,.
\end{equation}
From this we next deduce that 
$\nu_{\hat D_r} \boxtimes \sigma_{\semic} \Rightarrow \nu_{\hat D}\boxtimes \sigma_{\semic}$. Indeed,
recall \cite[Lemma 8]{ArPe09} that the \abbr{lhs} of \eqref{eq:lim-sq}
equals $(\nu_{\hat D_r} \boxtimes \sigma_{\semic})^{(2)}$, while likewise
its \abbr{rhs} equals $(\nu_{\hat D} \boxtimes \sigma_{\semic})^{(2)}$.
For any $f \in C_b(\RR)$, the function
$g(x)=\frac{1}{2} [f(\sqrt{x})+f(-\sqrt{x})]$ is in $C_b(\RR_+)$. Thus, the weak convergence
$(\nu_{\hat D_r} \boxtimes \sigma_{\semic})^{(2)} \Rightarrow
(\nu_{\hat D} \boxtimes \sigma_{\semic})^{(2)}$, implies that 
$\nu_{\hat D_r} \boxtimes \sigma_{\semic} \Rightarrow \nu_{\hat D}\boxtimes \sigma_{\semic}$ for the corresponding 
symmetric source measures of the map $x \mapsto x^2$.
In conclusion, it suffices hereafter to prove the
theorem for the case where
$\hat D_i^{(n)}\in\cS$, a fixed finite set, for all $n$.

\medskip \noindent
{\bf Step II.}
For $1\leq a\leq \ell$, let $m_a^{(n)} = |\sfV_n^a|$ where $\sfV_n^a = \{v\in [n] : \deg(v) = d_a \omega_n \}$ is the set of vertices of degree $d_a\omega_n $ in $\sfG_n$. By assumption, $m_a^{(n)} / n \to \nu_a$ for $\nu_a := \nu_{\hat D}(\{d_a\})$.
(Observe that our choice of $\omega_n$ dictates that $\sum_a d_a\nu_a = 1$.)
For all $1\leq a,b \leq \ell$, set
\[ q_{a,b} := d_a d_b \nu_b\,.\]
Let $\sfH_n = \cup_{a\leq b} \sfH_{a,b}^{(n)}$ for the edge-disjoint multigraphs $\sfH_{a,b}^{(n)}$ that are generated by the configuration model in the following way.
\begin{itemize}
\item
For $1\leq a \leq \ell$, let $\sfH_{(a,a)}^{(n)}$ be the random $D_{a,a}^{(n)}$-regular multigraph on $\sfV_n^a$, where $D_{a,a}^{(n)} m_a^{(n)}$ is even and $\hat D_{a,a}^{(n)} := D_{a,a}^{(n)}/\omega_n$ converges to $q_{a,a}$ as $n\to\infty$.
\item For $1\leq a < b\leq \ell$, let $\sfH_{a,b}^{(n)}$ be the random bipartite multigraph with sides $(\sfV_n^a,\sfV_n^b)$ and degrees $D_{a,b}^{(n)}$ in $\sfV_n^a$ and $D_{b,a}^{(n)}$ in $\sfV_n^b$,   such that
the detailed balance
\[ D_{a,b}^{(n)}m_a^{(n)} =  D_{b,a}^{(n)}m_b^{(n)}\]
holds, and $\hat D_{a,b}^{(n)} := D_{a,b}^{(n)}/\omega_n $ tends to $ q_{a,b}$ as $n\to\infty$ (hence, $\hat D_{b,a}^{(n)}\to q_{b,a}$).
\end{itemize}
Finally, setting
\begin{equation}\label{dfn:lamb-ab}
\lambda_{a,b}^{(n)} = \frac{\omega_n }n d_a d_b \,,
\end{equation}
let $\widetilde \bA_n$ denote the single-adjacency matrix of the multigraph
$\widetilde \sfH_n = \cup_{a \leq b} \widetilde \sfH_{a,b}^{(n)}$, where the edge-disjoint multigraphs $\widetilde \sfH_{a,b}^{(n)}$ are defined as follows.
\begin{itemize}
\item For $1\leq a \leq b \leq \ell$, mutually independently set the multiplicity of the edge between distinct $i\in \sfV_a^n$ and $j\in \sfV_b^n$ in $\widetilde \sfH_{a,b}^{(n)}$ to be a  $\PPo(\lambda_{a,b}^{(n)})$ random variable.
\item For $1\leq a \leq \ell$, mutually independently set the number of loops incident to $i\in \sfV_n^a$ to be a $ \PPo(\frac12\lambda_{a,a}^{(n)})$ random variable.
\end{itemize}
Our next proposition shows that
$\cL^{\hat \bA_n}\Rightarrow \nu_{\hat D}\boxtimes \sigma_{\semic}$, in probability,
whenever
\begin{equation}\label{dfn:ind-adj}
\cL^{\omega_n^{-1/2} \widetilde \bA_n} \Rightarrow  \nu_{\hat D}\boxtimes \sigma_{\semic}
\,, \quad \hbox{ in probability}\,.
\end{equation}

\begin{proposition}\label{prp:coup}
The empirical spectral measures of  $\bA_n,  \bA'_n$ and $  \widetilde \bA_n$, the respective single-adjacency matrices of $\sfG_n, \sfH_n$ and $\widetilde \sfH_n$, satisfy
\[  d_{\textsc{bl}}\left(\cL^{\omega_n^{-1/2} \bA_n}, \cL^{\omega_n^{-1/2} \bA'_n}\right) =o(1)\quad\mbox{ and }\quad
 d_{\textsc{bl}}\left(\cL^{\omega_n^{-1/2} \bA'_n}, \cL^{\omega_n^{-1/2} \widetilde \bA_n}\right) = o(1)\,, \]
 in probability, as $n\to\infty$.
\end{proposition}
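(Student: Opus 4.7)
Both claims reduce to edge counting via the Hoffman--Wielandt bound used in Lemma~\ref{lem:truncation}: for single-adjacency matrices $M,M'$ of multigraphs on $n$ vertices,
\[
d_{\textsc{bl}}\!\bigl(\cL^{\omega_n^{-1/2}M},\cL^{\omega_n^{-1/2}M'}\bigr)^{2}\;\leq\;\tfrac{1}{n\omega_n}\tr\!\bigl((M-M')^{2}\bigr)\;\leq\;\tfrac{2}{n\omega_n}\,\bigl|E(M)\xor E(M')\bigr|,
\]
so it suffices to exhibit couplings under which $|E(G_n)\xor E(H_n)|$ and $|E(H_n)\xor E(\widetilde H_n)|$ are both $o(n\omega_n)$ with high probability.

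For the first bound, the plan is to exploit the two-stage description of the configuration model generating $G_n$: the uniform half-edge matching first yields the random counts $N_{a,b,i}$ (the number of half-edges of vertex $i\in V_n^a$ matched to $V_n^b$), and conditional on these counts the sub-matching within each class pair is itself a uniform configuration-model matching, with degree sequence $\{N_{a,b,i}\}_{i\in V_n^a}$ on the $a$-side and $\{N_{b,a,j}\}_{j\in V_n^b}$ on the $b$-side. Fixing $(a,b)$ and conditioning on these counts, I would couple this sub-matching to $H_{a,b}^{(n)}$ through an auxiliary configuration model on the vertexwise maxima $N_{a,b,i}\vee D_{a,b}^{(n)}$, using Observation~\ref{obs:coupling} with nested blue-markings of the $N_{a,b,i}$- and $D_{a,b}^{(n)}$-subsets. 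This produces a coupling with
\[
|E_n^{(a,b)}\xor E(H_{a,b}^{(n)})|\;=\;O\!\Bigl(\sum_{i\in V_n^a}|N_{a,b,i}-D_{a,b}^{(n)}|+\sum_{j\in V_n^b}|N_{b,a,j}-D_{b,a}^{(n)}|\Bigr).
\]
The $N_{a,b,i}$ are marginally hypergeometric with mean $D_{a,b}^{(n)}=\Theta(\omega_n)$ and variance $O(\omega_n)$, hence $\E|N_{a,b,i}-D_{a,b}^{(n)}|=O(\sqrt{\omega_n})$; summing over the finitely many pairs and all $n$ vertices yields $O(n\sqrt{\omega_n})=o(n\omega_n)$ in expectation, and in probability by Markov.

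For the second bound, I would apply a standard Poisson approximation to each $H_{a,b}^{(n)}$. With maximum degree $D_{a,b}^{(n)}=\Theta(\omega_n)$ and total half-edges $\Theta(n\omega_n)$, the hypothesis $\omega_n=o(n)$ places us in the sparse regime $\omega_n=o(\sqrt{n\omega_n})$, in which the joint distribution of edge-multiplicities in $H_{a,b}^{(n)}$ can be coupled to the independent $\Po(\lambda_{a,b}^{(n)})$ variables defining $\widetilde H_{a,b}^{(n)}$ with expected symmetric difference $o(n\omega_n)$; one way is to sandwich $H_{a,b}^{(n)}$ between two Poissonized configuration models and compare their (Poisson) degree sequences, again via Observation~\ref{obs:coupling}-type reductions, with the same $\sqrt{\omega_n}$-per-vertex degree fluctuations as above. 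Summing over the finitely many class pairs completes the claim.

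The main obstacle lies in Step~1: the two blue-markings must be arranged jointly (not independently) so that the blue-blue edges in the auxiliary configuration model survive in both $H_{a,b}^{(n)}$ and the $(a,b)$-part of $G_n$, and the independent correction matchings from step~(c) of Observation~\ref{obs:coupling} need to be shown to contribute only $O(\sum_i|N_{a,b,i}-D_{a,b}^{(n)}|)$ fresh edges on each side (since each unmatched blue half-edge is accounted for by a degree excess at some vertex). Once the coupling is set up, the bound follows from the hypergeometric concentration of the $N_{a,b,i}$.
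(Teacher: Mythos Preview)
Your reduction via Hoffman--Wielandt to edge counting, and the use of Observation~\ref{obs:coupling} to couple configuration models through an auxiliary degree sequence, are exactly the ingredients the paper uses. The organization differs, however, and the paper's is both cleaner and more complete.

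The key structural difference is that the paper proves a single unifying fact (Claim~\ref{clm:multigraphs-esd}): conditional on the full array of sub-degrees $D_{i,b}^{(n,k)}:=\sum_{j\in V_n^b}(A_{G_n^{(k)}})_{i,j}$, the three multigraphs $G_n$, $H_n$, $\widetilde H_n$ all have the \emph{same} conditional law. This puts both comparisons on identical footing, so one bridge construction handles everything: for each adjacent pair one passes through a bridge multigraph whose sub-degrees are the vertexwise \emph{minimum} of the two, blue-marking each of the larger models down to the common smaller sequence. Going through the minimum sidesteps your acknowledged obstacle of arranging two joint nested markings in a single max-degree model; here each of $G_n$ and $H_n$ is separately marked down to the bridge, the step-(b) edges of the bridge are automatically retained, and the non-\textsc{blue} half-edges number exactly $\sum_{i,b}|D_{i,b}^{(n,0)}-D_{a(i),b}^{(n)}|$. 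The triangle inequality for $d_{\textsc{bl}}$ then combines the two pairwise couplings. Your max-bridge can be made to work and yields the same $O(\sum_i|N_{a,b,i}-D_{a,b}^{(n)}|)$ bound, but it is a more roundabout route to the same place.

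The thin spot in your proposal is Step~2. ``Standard Poisson approximation'' and ``sandwich between two Poissonized configuration models'' are not made precise, and any honest implementation must use the fact that $\widetilde H_{a,b}^{(n)}$, \emph{conditioned on its (random Poisson) degree sequence}, is itself a uniform configuration model---this is precisely the $k=4$ case of Claim~\ref{clm:multigraphs-esd}, and it is not obvious (the paper checks it by an explicit multinomial computation). Once you have that identity, your own bridge-plus-degree-fluctuation argument from Step~1 applies verbatim to the pair $(H_n,\widetilde H_n)$, with $\E|P_i-\omega_n q_{a,b}|=O(\sqrt{\omega_n})$ replacing the hypergeometric estimate. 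So what is missing is not the concentration input but the conditional-law identity that lets you treat $\widetilde H_n$ as a configuration model at all; absent that, the coupling you describe has no obvious starting point.
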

\begin{proof}
Setting
\[ \sfG_n^{(0)} = \sfG_n\,,\quad \sfG_n^{(2)} = \sfH_n\,,\quad \sfG_n^{(4)} = \widetilde \sfH_n\,,\]
associate with each multigraph its sub-degrees
(accounting for edge multiplicities),
\[
D_{i,b}^{(n,k)} := \sum_{j\in \sfV_n^b} (\bA_{\sfG_n^{(k)}})_{i,j}\,,
\qquad i\in [n]\,, \qquad 1 \leq b\leq \ell \,,
\]
so in particular
$D_{i,b}^{(n,2)} = D_{a(i),b}^{(n)}$ where
$a(i)$ is such that $i\in \sfV_n^a$. Of course, for $k=0,2,4$,
\begin{equation}\label{e:dbal}
m^{(n,k)}_{a,b} := \sum_{i \in \sfV_n^a} D_{i,b}^{(n,k)} = m^{(n,k)}_{b,a}
\,, \quad m^{(n,k)}_{a,a} \; \mbox{ is even, }
\qquad
1 \le a, b \le \ell \,.
\end{equation}

\begin{claim}\label{clm:multigraphs-esd}
Conditional on a given sequence of sub-degrees $\{D_{i,b}^{(n,k)}\}$, the adjacency matrices $\bA_{\sfG_n^{(k)}}$ for $k\in\{0,2,4\}$ all have the same conditional law.
\end{claim}
\begin{proof} Observe that $\sfG_n=\sfG_n^{(0)}$ gives the same weight to each perfect matching of its half-edges, thus conditioning on $\{D_{i,b}^{(n,k)}\}$ amounts to specifying a subset of permissible matchings, on which the conditional distribution would be uniform.
The same applies to the graphs $\sfH_{(a,b)}^{(n)}$ for all $1\leq a\leq b\leq \ell$, each being an independently drawn uniform multigraph, and hence to their union $\sfH_n=\sfG_n^{(2)}$, thus establishing the claim for $k=0,2$. To treat $k=4$, notice that the probability that the multigraph $\sfH_{(a,b)}^{(n)}$, $a\neq b$, given the sub-degrees $\{D_{i,b}^{(n,k)}\}$, features the adjacency matrix $\ba:=(a_{i,j})$ ($i \in  \sfV_n^a$, $j\in \sfV_n^b$), is
\[ \frac1{m_{a,b}^{(n,k)}!}\bigg(\prod_{i\in \sfV_n^a}  \frac{D_{i,b}^{(n,k)}!}{\prod_{j\in \sfV_n^b} a_{i,j}!}\bigg)\bigg(\prod_{j\in \sfV_n^b} D_{j,a}^{(n,k)} !\bigg) \propto \prod_{i \in \sfV_n^a}\prod_{j \in \sfV_n^b} \frac{1}{a_{i,j}!}
\]
by the definition of the configuration model. As the distribution of a vector of $t$ i.i.d.\ Poisson variables with mean $\lambda$, conditional on their sum being $m$, is multinomial with parameters $(m,\frac1t,\ldots,\frac1t)$, the analogous conditional probability under $\widetilde \sfH_{(a,b)}^{(n)}$ is
\[
 \prod_{i\in \sfV_n^a} \frac{D_{i,b}^{(n,k)}! }{\prod_{j\in \sfV_n^b} a_{i,j}!} |\sfV_n^b|^{-D_{i,b}^{(n,k)}} \propto \prod_{i \in V^n_a} \prod_{j \in \sfV_n^b} \frac1{a_{i,j}!}\,.
\]
Lastly, the probability that $\sfH_{(a,a)}^{(n)}$, conditional on  $\{D_{i,b}^{(n,k)}\}$, assigns to $\ba=(a_{i,j})$ is
\[
\prod_{i\in  \sfV_n^a} \frac{D_i !}{2^{a_{i,i}}} \prod_{\substack{j\in \sfV_n^a \\ j>i}} \frac1{a_{i,j}!} \propto 2^{-\sum_i a_{i,i}}
\prod_{\substack{i,j\in \sfV_n^a \\ j>i}}
\frac1{a_{i,j}!}\,,
\]
whereas the analogous conditional probability under $\widetilde{\sfH}_{(a,b)}^{(n)}$ (now involving a vector that is multinomial with parameters $(D_{i,b}^{(n,k)},\frac{1}{2t+1},\frac{2}{2t+1},\ldots,\frac{2}{2t+1})$ for $t=|\{ j \in \sfV_n^a :  j\geq i\}|$, recalling the factor of 2 in the definition of the rate of loops under $\widetilde H^{(n)}_{(a,a)}$), is
\[
 \prod_{i\in  \sfV_n^a} \frac{D_{i,b}^{(n,k)}! }{\prod_{
\substack{j \in \sfV_n^a \\  j>i}} a_{i,j}!} 2^{-a_{i,i}} \bigg(\frac2{|\{j\in \sfV_n^a:  j\geq i\}|}\bigg)^{-D_{i,b}^{(n,k)}}  \propto 2^{-\sum_i a_{i,i}}
\prod_{\substack{i,j\in \sfV_n^a \\ j>i}}
\frac1{a_{i,j}!}\,.
\]
This completes the proof of the claim.
\end{proof}

We will introduce two auxiliary
multigraphs $\sfG_n^{(1)}$ and $\sfG_n^{(3)}$
having the latter property, and further, the
corresponding single-adjacency matrices
(or single-edge sets $\sfE_n^{(k)}$), can be coupled in such a way that
\begin{equation}\label{e:k-coup}
\sum_{k=1}^4 \EE \Big[ \big| \sfE_n^{(k)} \xor \sfE_n^{(k-1)}\big| \Big]
= o(n \omega_n)\,.
\end{equation}
It follows that, under the resulting coupling, both
$\EE[\tr\big( (\bA_n -\bA'_n)^2\big)] = o(n \omega_n)$ and
$\EE[\tr\big( (\bA'_n -\widetilde \bA_n)^2\big)] = o(n \omega_n)$,
yielding Proposition~\ref{prp:coup} via the Hoffman--Wielandt bound.

Proceeding to construct the multigraph $\sfG_n^{(1)} $, write, for all $i\in [n]$ and $1 \leq b\leq \ell$,
\begin{align}\label{e:pre-bal}
D_{i,b}^{(n,1)} &= D_{i,b}^{(n,0)} \wedge D_{i,b}^{(n,2)}\,,
\end{align}
then further uniformly reduce the number
of potential half-edges in $\sfG_n^{(1)}$ until achieving \eqref{e:dbal} for $k=1$.
That is, if \eqref{e:pre-bal} yields $m_{a,b}^{(n,1)} > m_{b,a}^{(n,1)}$
for some $a \ne b$, we uniformly choose and eliminate
$m_{a,b}^{(n,1)}-m_{b,a}^{(n,1)}$ potential half-edges leading
from $\sfV_n^a$ to $\sfV_n^b$ and accordingly
adjust $\{D^{(n,1)}_{i,b}, i \in \sfV_n^a\}$, an operation which only affects the constraint \eqref{e:dbal} for that particular $a \ne b$.
With Observation~\ref{obs:coupling} in mind,  construct two \emph{bridge} copies of the random multigraph
$\sfG_n^{(1)}$ with the adjusted sub-degrees $\{D^{(n,1)}_{i,b}\}$, as follows:
\begin{itemize}
\item For each $i$ and $b$, mark as \textsc{blue($b$)} a uniformly chosen
subset of $D_{i,b}^{(n,1)}$ half-edges incident to vertex $i$, the other
part of which is, according to $\sfG_n^{(0)}$, in $\sfV_n^b$.
\item Retain for $\sfG_n^{(1)}$ every edge of $\sfG_n^{(0)}$ where both parts
are marked with \textsc{blue}.
\item After removing all non-\textsc{blue} half-edges of $\sfG_n^{(0)}$, complete
the construction of $\sfG_n^{(1)}$ by uniformly matching, for each $a \ge b$,
all unmatched \textsc{blue$(b)$} half-edges of $\sfV_n^a$
to all unmatched \textsc{blue$(a)$} half-edges of $\sfV_n^b$.
\item A second copy of $\sfG_n^{(1)}$ is obtained by repeating the preceding
construction, now with $\sfG_n^{(2)}$ taking the role of $\sfG_n^{(0)}$.
\end{itemize}
Replacing in the above procedure the multigraph $\sfG_n^{(0)}$ by the multigraph
$\sfG_n^{(4)}$, the same construction produces a multigraph $\sfG_n^{(3)}$
having sub-degrees
\begin{equation}\label{e:bridge}
D^{(n,3)}_{i,b} \le D^{(n,2)}_{i,b} \wedge D^{(n,4)}_{i,b} \,,
\end{equation}
and two \emph{bridge} copies of $\sfG_n^{(3)}$ which are
coupled (using such \textsc{blue} marking), to $\sfG_n^{(2)}$ and $\sfG_n^{(4)}$,
respectively.

Next, as for \eqref{e:k-coup}, recall that
$|\sfE_n^{(k)} \xor \sfE_{n}^{(k-1)}| \le |\sfE_{\sfG_n^{(k)}} \xor \sfE_{\sfG_n^{(k-1)}}|$,
which under our coupling is at most the number of edges of
$\sfG_n^{(2 [k/2])}$ that had at least one non-\textsc{blue} part.
This in turn is at most
\[
\Delta^{(n)} := \sum_{a,b=1}^{\ell} |m^{(n,k)}_{a,b} - m^{(n,k-1)}_{a,b}| \,.
\]
Our construction is such that
$m^{(n,0)}_{a,b} \wedge m^{(n,2)}_{a,b} \ge m^{(n,1)}_{a,b}$
and
$m^{(n,4)}_{a,b} \wedge m^{(n,2)}_{a,b} \ge m^{(n,3)}_{a,b}$.
Further, if the sub-degrees of bridge multigraphs were set by
\eqref{e:pre-bal}, then
\[
m^{(n,0)}_{a,b} + m^{(n,2)}_{a,b} - 2 m^{(n,1)}_{a,b}
= \sum_{i \in \sfV_n^a} |D^{(n,0)}_{i,b} - D^{(n)}_{a,b}| :=
\Delta^{(n,1)}_{a,b} \,,
\]
for any $1 \le a,b \le \ell$, with analogous identities
relating $m^{(n,3)}_{a,b}$ and $\Delta^{(n,3)}_{a,b}$.
Since \eqref{e:dbal} holds for $k=0,2,4$, while
$m^{(n,1)}_{a,b} \wedge m^{(n,1)}_{b,a}$, $b < a$ are
not changed by the $\sfG_n^{(1)}$ sub-degree adjustments
(and similarly for the $\sfG_n^{(3)}$ sub-degree adjustments),
we deduce that
\[
\Delta^{(n)} \le 2 \sum_{a,b=1}^{\ell} \Delta^{(n,1)}_{a,b}
+ 2 \sum_{a,b=1}^{\ell} \Delta^{(n,3)}_{a,b} \,.
\]
Thus, we have \eqref{e:k-coup} as soon as we show that for any
$1 \le a,b \le \ell$,
\[
\EE \Delta^{(n,1)}_{a,b} + \EE \Delta^{(n,3)}_{a,b} = o(n \omega_n) \,,
\]
which by our choice of $\{D^{(n)}_{a,b}\}$ follows from having for
any fixed $i \in \sfV_n^a$,
\begin{equation}\label{e:final-est}
\EE |\omega_n^{-1} D^{(n,0)}_{i,b} - q_{a,b}|
+\EE |\omega_n^{-1} D^{(n,4)}_{i,b} - q_{a,b}| = o(1) \,.
\end{equation}
For $i \in \sfV_n^a$ the variable $D^{(n,4)}_{i,b}$ is Poisson with mean $(1+o(1))\lambda_{a,b}^{(n)} m_b^{(n)} = \omega_n q_{a,b} (1+o(1))$
(see \eqref{dfn:lamb-ab}), hence
$\EE |\omega_n^{-1} D^{(n,4)}_{i,b} - q_{a,b}| \to 0$. Similarly,
$D^{(n,0)}_{i,b}$ counts how many of the $d_a \omega_n$
half-edges emanating from such $i$, are paired by the uniform matching
of the
half-edges of $\sfG_n$,
with half-edges from the subset $\sfE_n^b$ of those incident to $\sfV_n^b$.
With $|\sfE_n^b| = d_b \omega_n m_b^{(n)}$, the probability of a specific
half-edge paired with an element of $\sfE_n^b$ is
$\mu_n = (|\sfE_n^b|-1_{\{a = b\}})/(2 |\sfE_{\sfG_n}|-1)  \to d_b \nu_b$, hence
$\omega_n^{-1} \EE D^{(n,0)}_{i,b} = d_a \mu_n \to q_{a,b}$.
It is not hard to verify that two specific half-edges
incident to $i \in \sfV_n^a$ are both paired with elements of $\sfE_n^b$
with probability $v_n = \mu_n^2 (1+o(1))$. Consequently,
\[
{\rm Var} (\omega_n^{-1} D^{(n,0)}_{i,b}) \le d_a \frac{\mu_n}{\omega_n}
+ d_a^2 (v_n - \mu_n^2) \to 0\,,
\]
yielding
the $L^2$-convergence of
$\omega_n^{-1} D^{(n,0)}_{i,b}$ to $q_{a,b}$ and thereby establishing
\eqref{e:final-est}.
\end{proof}

\medskip\noindent
{\bf Step III.} We proceed to verify \eqref{dfn:ind-adj} for the single-adjacency
matrices $\widetilde \bA_n$ of $\widetilde \sfH_n$. To this end, as argued before,
such weak convergence as in \eqref{dfn:ind-adj} is not affected by changing $o(n \omega_n)$
of the entries of $\widetilde \bA_n$, so \abbr{wlog} we modify the
law of number of loops in $\widetilde \sfH_n$ incident to each $i \in \sfV_n^a$ to
be a $\PPo(\lambda_{a,a}^{(n)})$ variable, yielding the
symmetric matrix $\widetilde \bA_n$ of independent upper triangular
Bernoulli($p^{(n)}_{a,b}$) entries, where
$p^{(n)}_{a,b}=1-\exp(-\lambda^{(n)}_{a,b})
$
when $i \in \sfV_n^a$ and $j \in \sfV_n^b$. In particular, the rank of
$\EE \widetilde \bA_n$ is at most $\ell$, so by Lidskii's theorem we get
\eqref{dfn:ind-adj} upon proving that
$\cL^{\hat \bB_n} \Rightarrow
\nu_{\hat D}\boxtimes \sigma_{\semic}$ in probability, for
$\hat \bB_n := \omega_n^{-1/2} (\widetilde \bA_n - \EE \widetilde \bA_n)$, a
symmetric matrix of uniformly (in $n$) bounded, independent upper-triangular entries $\{\hat
Z_{ij}\}$,
having zero mean and variance
$v^{(n)}_{a,b} := \omega_n^{-1} p^{(n)}_{a,b} (1-p^{(n)}_{a,b})
= \frac{1}{n} d_a d_b (1+o(1))$ when $i \in \sfV_n^a$, $j \in \sfV_n^b$.
As a special case of Remark~\ref{rem:goe} (corresponding to piecewise-constant diagonal matrices with values $\{d_a\}_{a=1}^\ell$), 
such convergence holds for the symmetric matrices $\bB_n$, whose
independent centered Gaussian entries $Z_{ij}$ have
variance $v^{(n)}_{a,b}$ when $i \in \sfV_n^a$ and $j \in \sfV_n^b$, subject
to on-diagonal rescaling $\EE Z_{ii}^2 = 2 v^{(n)}_{a(i),a(i)}$.
As in the classical proof of Wigner's theorem by the moment's method
(cf.~\cite[Sec. 2.1.3]{AGZ10}), it is easy to check
that for any fixed $k=1,2,\ldots$,
\[
\EE \Big[\frac{1}{n}\tr(\hat \bB_n^k) \Big] = \EE \Big[
\frac{1}{n}\tr (\bB_n^k) \Big] (1+o(1)) \,,
\]
since both expressions are dominated by those cycles of length $k$
that pass via each entry of the relevant matrix exactly twice
(or not at all). Further, adapting the concentration argument of
\cite[Sec.~2.1.4]{AGZ10} we deduce that as in the Wigner's case,
$\langle x^k, \cL^
{\hat \bB_n} - \EE\cL^{ \hat \bB_n} \rangle \to 0$ in probability,
for each fixed $k$,
thereby completing the proof of Theorem~\ref{thm:1}(a).

To prove Theorem~\ref{thm:1}(b), recall that  $|\sfE_n\triangle \sE_n| \le \bDelta_n$ for any coupling of the 
pair of matching which generate the graphs  $\sfG_n$ and $\sG_n$. 
Appealing to Proposition~\ref{prop:coupling} and the bound~\eqref{eq:deltaEn-simple}  following it,
we get that under the coupling $\mu$ provided by that proposition,
\[
    \EE_\mu [ |\sfE_n\triangle \sE_n| ] \le \EE_\mu [ \bDelta_n ] \le \sqrt{2 \EE_\mu [ \bDelta_n (\bDelta_n-1) ]} \le
    4 b_n \,,
\] 
where (recalling from~\eqref{eq-omega-choice} that $\omega_n=(2+o(1))|\sfE_n|/n$)
\begin{align*}  b_n^2 :=  \sqrt{2|\sfE_n|}  \sum_{j=1}^n D_j^2  &= (1+o(1)) n^{3/2}\sqrt{\omega_n}  \EE_{U_n} (D^{(n)}_{U_n})^2 \\
&= (1+o(1)) n^{3/2}\omega_n^{5/2}  \EE_{U_n} (\hat{D}^{(n)}_{U_n})^2
 = o(n^2 \omega_n^2)\end{align*}
via our assumption on the \abbr{rhs} of~\eqref{eq-Dn-moment}; thus, $ \EE_\mu [| \sfE_n \xor \sE_n|] = o(n\omega_n)$.
We claim that Lemma~\ref{lem:truncation} then concludes the proof. To see this, set 
$\hat{\bB}'_{n} \equiv \omega_n^{-1/2} \bA_{\sG_n}$ and further let
$\hat{\bA}_n \equiv \omega_n^{-1/2}\bA_n$ for the single-adjacency matrix $\bA_n$ associated with
$\bA_{\sfG_n}$. Since the entries of $\bA_n$ and $\bA_{\sG_n}$ may differ at most by one from each other, \eqref{eq-assumption-En} implies that
\[
    \EE_\mu \Big[\frac1n \tr\big((\hat{\bA}_n - \hat{\bB}'_n)^2\big) \Big]
    \le
    \frac{2}{n\omega_n} 
    \EE_\mu [ |\sfE_n\xor\sE_n| ]
    \to 0\,,
\]
as required for Lemma \ref{lem:truncation}.
\end{proof}

\begin{proof}[\textbf{\emph{Proof of Corollary~\ref{cor-iid}}}] 
The assumed growth of $\omega_n$ yields \eqref{eq-assumption-En}
out of \eqref{eq-omega-choice}. In case of $\sfG_n$, the latter amounts to
\begin{equation}\label{e:L2-wlln}
\frac1n \sum_{i=1}^n \hat D_i^{(n)} \to 1\,, \qquad \mbox{ in probability},
\end{equation}
which we get by applying the $L^2$-\abbr{wlln} for triangular arrays
with uniformly bounded second moments. The same reasoning
yields the required uniform integrability in \eqref{eq-Dn-moment}, namely,
that when $n \to \infty$ followed by $r \to \infty$
\begin{equation}\label{e:UI}
\frac1n \sum_{i=1}^n \hat D_i^{(n)} 1_{\{\hat D^{(n)}_i \ge r\}} \to 0\,, \qquad \mbox{ in probability}.
\end{equation}
Further, applying the weak law for non-negative triangular arrays
$\{(\hat D_i^{(n)})^2\}$ of uniformly bounded mean, at truncation level $b_n \gg n$, 
it is not hard to deduce that
\begin{equation}\label{e:weak-lln-d2}
\frac{1}{b_n} \sum_{i=1}^n (\hat D_i^{(n)})^2 \to 0\,, \qquad \mbox{  in probability},
\end{equation}
whereupon, considering
$b_n = n/\sqrt{\omega_n/n}$ results with  the \abbr{rhs} of \eqref{eq-Dn-moment}. Next, recall
that the empirical measures $\cL^{\hat \bLambda_n}$ of
i.i.d.\ $\hat D_i^{(n)}$ converge in probability
to the weak limit $\nu_{\hat D}$ of the laws of $\hat D_1^{(n)}$. Thus, 
Theorem~\ref{thm:1}(a) applies for $\sfG_n$ of 
degrees $[\omega_n \hat D_i^{(n)}]$,
yielding Corollary~\ref{cor-iid} in this case. 

Turning to the case of uniform simple graphs, thanks to \eqref{e:UI}, 
truncating the degrees $[\omega_n \hat D_i^{(n)}]$ at some $\bar d_n \gg \omega_n$
removes at most $o(n \omega_n)$ edges from $\sfE_n$. Thus, such truncation neither 
affects \eqref{eq-omega-choice}, nor the preceding  verification of \eqref{eq-Dn-moment}.
Further, such truncation alters only $o(n)$ degrees, yielding the same limit $\nu_{\hat D}$
for $\cL^{\hat \bLambda_n}$. In view of
Theorem~\ref{thm:1}(b), the stated convergence of $\cL^{\hat \bA_{\sG_n}}
$ holds, 
provided that $\{ [\omega_n \hat D_i^{(n)}] \wedge \bar d_n \}$ are graphical \abbr{whp}
as $n \to \infty$. To this end, inspired by the proof of 
\cite[Theorem 1(d)]{AL05}, recall from the Erd\H{o}s-Gallai theorem, 
that integers $d_1 \ge d_2 \ge \cdots \ge d_n \ge 0$ are graphical if 
\begin{equation}\label{eq:Er-Ga}
2 \sum_{i=1}^j d_i \le j(j-1) + \sum_{i=1}^n \min(j,d_i) \,, \qquad \forall \; 1 \le j \le n \,.
\end{equation} 
Thanks to \eqref{eq-assumption-En} we can fix 
$j_n=o(n)$ 
such that $j_n / \sqrt{n \omega_n} \to \infty$. The \abbr{lhs} of \eqref{eq:Er-Ga}
is in our setting at most $2 \omega_n \sum_i \hat{D}_i^{(n)}$, which in view of 
\eqref{e:L2-wlln} is for $j>j_n$ negligible in comparison with 
the term $j(j-1)$ on the \abbr{rhs} of \eqref{eq:Er-Ga}. Denoting by 
$o_p(1)$ the \abbr{lhs} of \eqref{e:weak-lln-d2} at $b_n = n^2/j_n \gg n$,
we further have here that the \abbr{lhs} of \eqref{eq:Er-Ga} is at most 
\begin{equation}\label{eq:lhs}
2 \min \Big( j \bar d_n, n \omega_n \, o_p(1) \Big) \,, \qquad \forall \; 1 \le j \le j_n \,.
\end{equation}
The Paley--Zygmund inequality yields $\inf_n \PP(\hat D_i^{(n)} \ge 2/3) \ge 2 \delta$, for  some 
$\delta > 0$. Hence,
\[
\liminf_{n \to \infty} \frac{1}{n} \sum_{i=1}^n \one_{\{D_i^{(n)} \ge \omega_n/3\}} > \delta\,, \qquad \mbox{ in probability}. 
\]
This yields that the right-most term in  \eqref{eq:Er-Ga} is for all large $n$ and $j \in [n]$, at least 
\[
\delta \min(j n, n \omega_n/3) \,,
\] 
which in turn exceeds 
\eqref{eq:lhs} (as $\bar d_n = o(n)$), thus completing the proof.
\end{proof}

\section{Coupling simple graphs and multigraphs: Proof of Proposition \ref{prop:coupling}}\label{sec:coupling}

Fixing \emph{graphical} degrees $D_1 \ge D_2 \ge \cdots \ge D_n$, let 
$\sfm_n := \sum_{i} D_i = 2|\sfE_n|$. Enumerate 
the $\sfm_n$ half-edges as follows: each half-edge $e$ is identified with a vertex $v(e) \in [n]$; the first $D_1$ half-edges have $v(e)=1$, the next $D_2$ 
have $v(e)=2$ and so on. A matching of half-edges $m: [\sfm_n] \mapsto [\sfm_n]$ is
an involution without fixed points (i.e., $m(e)=m^{-1}(e)$ and
$m(e) \ne e$ for all $e \in [\sfm_n]$). A coupled pair of multigraphs $(\sfG_n,\sG_n)$ 
is hereby represented by a pair of matching $(\sfX,\sfY)$,
restricting $\sfY(\cdot)$ to the non-empty collection of matching that 
correspond to a simple graph; namely, $v(e) \ne v(\sfY(e))$ (no loops)
and $\{v(e),v(\sfY(e))\} \ne \{v(f),v(\sfY(f))\}$ (no multiple edges) for any $f \ne \{e,\sfY(e)\}$. 

\begin{figure}
\centering
  \begin{tikzpicture}  
  \foreach \a / \b in {0/0,0/1,1/0,1/1}
  {
  \begin{scope}[shift={(\a * 7.5, -4.5 * \b)}]  
  \path [rounded corners,fill=black!3] (0.6,1) rectangle (5.6,-3);
	\foreach \y in {1,...,4}
	{\begin{scope}[shift={(\y * 1.25, 0)}]
            \node [circle,fill=black!50,scale=0.6] (VT\a\b\y) at (0,0) {};
            	\node (ET\a\b\y) at (0,-.66) {};
            	\draw [color=black](VT\a\b\y) -- (ET\a\b\y) ;
            \draw [dashed] (0, -0.18) circle [x radius=0.2, y radius=0.35];
    \end{scope}
    }
	\foreach \y in {1,...,2}
    	{\begin{scope}[shift={( 1.25 * \y + 1.25, -2.5)}]
            \node [circle,fill=black!50,scale=0.6] (VB\a\b\y) at (0,0) {};
            	\node (EB\a\b\y) at (0,.66) {};
            	\draw [color=black](VB\a\b\y) -- (EB\a\b\y) ;
            \draw [dashed] (0, 0.18) circle [x radius=0.2, y radius=0.35];
    \end{scope}
  	}
  	\node [label={\small $e$}, xshift=-13pt, yshift=-4pt] at (VB\a\b1) {};
    \node [label={\small $f$}, xshift=13pt, yshift=-4pt] at (VB\a\b2) {};
  	\end{scope}
  }
  \foreach \b in {0,1} {
    \node [label={\small $\sfX_0(e)$}, yshift=4pt] at (VT0\b1) {};
    \node [label={\small$\sfX_0(f)$}, yshift=4pt] at (VT0\b2) {};
    \node [label={\small $\sfY_0(e)$}, yshift=4pt] at (VT1\b3) {};
    \node [label={\small$\sfY_0(f)$}, yshift=4pt] at (VT1\b4) {};
  }
  \path[very thick,purple] (EB001.south) edge [bend left=40] (ET001.north);
  \path[very thick,purple] (EB002.south) edge [bend left=30] (ET002.north);
  \path[very thick,blue] (EB101.south) edge [bend right=30] (ET103.north);
  \path[very thick,blue] (EB102.south) edge [bend right=40] (ET104.north);
  \path[very thick,purple] (EB011.south) edge [bend left=50] (EB012.south);
  \path[very thick,purple] (ET011.north) edge [bend right=50] (ET012.north);
  \path[very thick,blue] (EB111.south) edge [bend left=50] (EB112.south);
  \path[very thick,blue] (ET113.north) edge [bend right=50] (ET114.north);
  \node at (6.85,-1.1) {\large $(\sfX_0,\sfY_0)$};
  \node at (6.85,-5.6) {\large $(\sfX_1,\sfY_1)$};
\end{tikzpicture}
  \vspace{-0.3cm}
 \caption{Coupling of the chains $(\sfX_t,\sfY_t)$ corresponding to $(\sfG_n,\sG_n)$.}
  \label{fig:coupling-multi-simple}
  \vspace{-0.2cm}
\end{figure}

Starting from any such pair of matching $(\sfX_0,\sfY_0)$, consider the switching
Markov chain $(\sfX_k,\sfY_k)$ that proceeds as following (see also Figure~\ref{fig:coupling-multi-simple}):
\begin{itemize}
\item Uniformly choose $e \ne f \in [\sfm_n]$ and disconnect their matching in $\sfX_k$ and $\sfY_k$;
\item Reconnect $e$ with $f$, and $\sfX_k(e)$ with $\sfX_k(f)$, to get the match $\sfX_{k+1}$;
\item If reconnecting $e$ with $f$ and $\sfY_k(e)$ with $\sfY_k(f)$ yields a simple graph, set
this to be $\sfY_{k+1}$. Otherwise, leave $\sfY_{k+1} = \sfY_k$ unchanged.
\end{itemize}
We say that coupling succeeds in the $k$-th step if the proposed move 
to $\sfY_{k+1}$ results in a simple graph, otherwise saying that the coupling failed
(in the $k$-th step). 

The marginal $(\sfX_k)$ evolves as a Markov chain in the space of all matching, 
with the marginal $(\sfY_k)$ likewise evolving as 
a Markov chain in the non-empty subset of all matching that correspond 
to simple graphs with the specified degrees. These switching chains are
further reversible with respect to the corresponding uniform measures.  Both marginal chains 
have been extensively studied as means of sampling uniform graphs subject to given degrees.
In particular, it is well-known (\cite{Taylor81}; cf.\ also the recent work~\cite{GS18})
that each of these marginals is an irreducible Markov chain. 
Having a non-empty finite state space, the Markov chain $(\sfX_k,\sfY_k)$ 
admits an invariant probability measure $\mu$, and by the preceding, any such $\mu$ is 
a coupling between the random multigraph $\sfG_n$ and the corresponding 
uniformly simple graph $\sG_n$ of the specified degrees. 

Denoting by 
\[
    \sfC_k \equiv \{e\in [\sfm_n]: \sfX_k(e)=\sfY_k(e)\} \,,
\]
the common part of the two matching $\sfX_k,\sfY_k$,  note that under an invariance measure
$\EE_\mu [|\sfC_k|]$ must be independent of $k$. We further have the following lower bound
on the change between $|\sfC_{k+1}|$ and $|\sfC_k|$:
\begin{equation}\label{clm:ob1}
|\sfC_{k+1}| - |\sfC_k| \ge 2 \one_{\{e,f \notin \sfC_k\}} - 4 \one_{\{\mbox{coupling fails in step $k$}\}} \,.
\end{equation}
Indeed, \eqref{clm:ob1} is verified by enumerating over the seven possible 
cases for $e,f\in [\sfm_n]$: 
\begin{enumerate}[\quad I.]
    \item $\sfX_0(e) = \sfY_0(e) = f $;  
    \item $\sfX_0(e) = \sfY_0(e)\neq f$, $\sfX_0(f) = \sfY_0(f)$;
  \item $\sfX_0(e) = \sfY_0(e)\neq f$, $\sfX_0(f) \neq \sfY_0(f)$ 
            or $\sfX_0(f) = \sfY_0(f)\neq e$, 
            $\sfX_0(e) \neq \sfY_0(e)$;  
  \item $\sfX_0(e) = f\neq \sfY_0(e)$ 
            or $\sfY_0(e) = f\neq \sfX_0(e)$;
    \item $\sfX_0(e) = \sfY_0(f)$, $\sfX_0(f) = \sfY_0(e)$;
    \item $\sfX_0(e) = \sfY_0(f)$, $\sfX_0(f) \neq \sfY_0(e)$
            or $\sfX_0(f) = \sfY_0(e)$, $\sfX_0(e) \neq \sfY_0(f)$;
    \item $e,f, \sfX_0(e),\sfX_0(f),\sfY_0(e),\sfY_0(f)$
        are six distinct half-edges.
\end{enumerate}
{
\tabcolsep=0.10cm
\begin{table}[h]
\hspace{-0.15cm}
\begin{tabular}{clcc}
\toprule%
Case & \qquad\qquad\qquad Criterion & \multicolumn{2}{c}{\qquad $|\sfC_1|-|\sfC_0|$}  \\
  &  & success &  failure     \\
\midrule
I & 
{\footnotesize $\sfX_0(e) = f = \sfY_0(e) $}
\quad\; \raisebox{-6pt}{
  \begin{tikzpicture}
     \node[circle,scale=0.4,fill=black,label={\tiny$e$}] (e) at (0,0) {};
    \node[circle,scale=0.4,fill=black,label={\tiny$f$}] (f) at (1.25,0) {};
       \path[thick,purple] (e) edge [bend left=30] (f);
        \path[dashed, thick,blue] (e) edge [bend right=30] (f);
    \end{tikzpicture}
}
& 0
& --- \\
\noalign{\smallskip}
\midrule[0.25pt]
II&
{\footnotesize\!\!\begin{tabular}{l} $\sfX_0(e) = \sfY_0(e)\neq f$ \\
$\sfX_0(f) = \sfY_0(f)$\end{tabular}}
\quad\;
\raisebox{-12pt}{ \begin{tikzpicture}
    \node[circle,scale=0.4,fill=black,label={\tiny$e$}] (e) at (0,0) {};
    \node[circle,scale=0.4,fill=black,label={\tiny$f$}] (f) at (1.25,0) {};
        \node[circle,scale=0.4,fill=black] (u) at (0,-0.75) {};
	\node[circle,scale=0.4,fill=black] (v) at (1.25,-0.75) {};
        \path[thick,dashed,blue] (e) edge [bend left=30] (u);
        \path[thick,purple] (e) edge (u);
	\path[thick,purple] (f) edge  (v);
	\path[thick,dashed,blue] (f) edge [bend left=30] (v);
    \end{tikzpicture}}
&
$0$
&
$-4$ \\
\noalign{\smallskip}
\midrule[0.25pt]
\raisebox{-3pt}{III}&
{\footnotesize
\!\! \!\!\!\!\begin{tabular}{l}
  \begin{tabular}{l}$\sfX_0(e) = \sfY_0(e)\neq f$ \\
 $\sfX_0(f) \neq \sfY_0(f)$
 \end{tabular}
\qquad\;\;\raisebox{-12pt}{\begin{tikzpicture}
    \node[circle,scale=0.4,fill=black,label={\tiny$e$}] (e) at (0.375,0) {};
    \node[circle,scale=0.4,fill=black,label={\tiny$f$}] (f) at (1.125,0) {};
        \node[circle,scale=0.4,fill=black] (u) at (0,-0.75) {};
	\node[circle,scale=0.4,fill=black] (v) at (.75,-0.75) {};
	\node[circle,scale=0.4,fill=black] (w) at (1.5,-0.75) {};
        \path[thick,dashed,blue] (e) edge [bend right=30] (u);
        \path[thick,purple] (e) edge (u);
	\path[thick,purple] (f) edge  (v);
	\path[dashed, thick,blue] (f) edge  (w);
    \end{tikzpicture}} 
 \\ 
 \noalign{\medskip}
 \hline
  \begin{tabular}{l}   $\sfX_0(f) = \sfY_0(f)\neq e$ \\
            $\sfX_0(e) \neq \sfY_0(e)$\end{tabular}
\qquad\;  \raisebox{-12pt}{\begin{tikzpicture}
    \node[circle,scale=0.4,fill=black,label={\tiny$e$}] (e) at (0.3755,0) {};
    \node[circle,scale=0.4,fill=black,label={\tiny$f$}] (f) at (1.125,0) {};
        \node[circle,scale=0.4,fill=black] (u) at (0.75,-0.75) {};
	\node[circle,scale=0.4,fill=black] (v) at (1.5,-0.75) {};
	\node[circle,scale=0.4,fill=black] (w) at (0,-0.75) {};
        \path[thick,dashed,blue] (f) edge [bend left=30] (v);
        \path[thick,purple] (f) edge (v);
	\path[thick,purple] (e) edge  (u);
	\path[dashed, thick,blue] (e) edge  (w);
       \end{tikzpicture}} 
            \end{tabular}}
& $0$ 
& $-2$
\\
\noalign{\smallskip}
\midrule[0.25pt]
\raisebox{-3pt}{IV}&
{\footnotesize \!\!\begin{tabular}{l}$\sfX_0(e) = f\neq \sfY_0(e)$  \qquad\;
 \raisebox{-10pt}{\begin{tikzpicture}
    \node[circle,scale=0.4,fill=black,label={\tiny$e$}] (e) at (0,0) {};
    \node[circle,scale=0.4,fill=black,label={\tiny$f$}] (f) at (1.25,0) {};
        \node[circle,scale=0.4,fill=black] (u) at (0,-0.6) {};
	\node[circle,scale=0.4,fill=black] (v) at (1.25,-0.6) {};
        \path[thick,purple] (e) edge [bend left=30] (f);
        \path[dashed, thick,blue] (e) edge (u);
	\path[dashed, thick,blue] (f) edge  (v);
    \end{tikzpicture}} 
\\ \noalign{\medskip} \hline 
             $\sfY_0(e) = f\neq \sfX_0(e)$ \qquad\
             \raisebox{-10pt}{ \begin{tikzpicture}
    \node[circle,scale=0.4,fill=black,label={\tiny$e$}] (e) at (0,0) {};
    \node[circle,scale=0.4,fill=black,label={\tiny$f$}] (f) at (1.25,0) {};
        \node[circle,scale=0.4,fill=black] (u) at (0,-0.6) {};
	\node[circle,scale=0.4,fill=black] (v) at (1.25,-0.6) {};
        \path[thick,dashed,blue] (e) edge [bend left=30] (f);
        \path[thick,purple] (e) edge (u);
	\path[thick,purple] (f) edge  (v);
    \end{tikzpicture}}
  \end{tabular}}
 & \begin{tabular}{c}{\small$~~^{(\star)}$}\\ $\geq 2$\end{tabular} 
 & \begin{tabular}{c}\\ $0$\end{tabular}  \\
\noalign{\smallskip}
\midrule[0.25pt]
\raisebox{-1pt}{V}&
 {\footnotesize 
 \!\!\begin{tabular}{l}
 $\sfX_0(e) = \sfY_0(f)$\\
  $\sfX_0(f) = \sfY_0(e)$
  \end{tabular}
  \qquad\qquad
\raisebox{-12pt}{ \begin{tikzpicture}
    \node[circle,scale=0.4,fill=black,label={\tiny$e$}] (e) at (0,0) {};
    \node[circle,scale=0.4,fill=black,label={\tiny$f$}] (f) at (1.25,0) {};
        \node[circle,scale=0.4,fill=black] (u) at (0,-0.75) {};
	\node[circle,scale=0.4,fill=black] (v) at (1.25,-0.75) {};
        \path[thick,dashed,blue] (e) edge (u);
        \path[thick,purple] (e) edge (v);
	\path[thick,purple] (f) edge  (u);
	\path[thick,dashed,blue] (f) edge (v);
    \end{tikzpicture}}
  }
& $4$ 
& $0$
\\
\noalign{\medskip}
\midrule[0.25pt]
\raisebox{-3pt}{VI}&
{\footnotesize
\!\! \!\!\!\!\begin{tabular}{l}
  \begin{tabular}{l}
   $\sfX_0(e) = \sfY_0(f)$\\ $\sfX_0(f) \neq \sfY_0(e)$
 \end{tabular}
\qquad\qquad\ \raisebox{-12pt}{\begin{tikzpicture}
    \node[circle,scale=0.4,fill=black,label={\tiny$e$}] (e) at (0.375,0) {};
    \node[circle,scale=0.4,fill=black,label={\tiny$f$}] (f) at (1.125,0) {};
        \node[circle,scale=0.4,fill=black] (u) at (0,-0.75) {};
	\node[circle,scale=0.4,fill=black] (v) at (0.75,-0.75) {};
	\node[circle,scale=0.4,fill=black] (w) at (1.5,-0.75) {};
	\path[dashed, thick,blue] (e) edge  (u);
        \path[thick,purple] (e) edge (v);
	\path[dashed, thick,blue] (f) edge  (v);
	\path[thick,purple] (f) edge  (w);
    \end{tikzpicture}} 
 \\ 
 \noalign{\medskip}
 \hline
  \begin{tabular}{l} $\sfX_0(f) = \sfY_0(e)$\\ $\sfX_0(e) \neq \sfY_0(f)$
	\end{tabular}
\qquad\qquad\ \raisebox{-12pt}{\begin{tikzpicture}
    \node[circle,scale=0.4,fill=black,label={\tiny$e$}] (e) at (0.375,0) {};
    \node[circle,scale=0.4,fill=black,label={\tiny$f$}] (f) at (1.125,0) {};
        \node[circle,scale=0.4,fill=black] (u) at (0,-0.75) {};
	\node[circle,scale=0.4,fill=black] (v) at (0.75,-0.75) {};
	\node[circle,scale=0.4,fill=black] (w) at (1.5,-0.75) {};
	\path[dashed, thick,blue] (e) edge  (v);
        \path[thick,purple] (e) edge (u);
	\path[dashed, thick,blue] (f) edge  (w);
	\path[thick,purple] (f) edge  (v);
    \end{tikzpicture}} 
            \end{tabular}}
& $2$ 
& $0$
\\
\noalign{\medskip}
\midrule[0.25pt]
VII&
{\footnotesize \!\!\! \begin{tabular}{l} $e,\sfX_0(e),\sfY_0(e),$ \\
$f, \sfX_0(f),\sfY_0(f)$\\
        are all distinct\end{tabular}}
\qquad\quad\ \raisebox{-12pt}{\begin{tikzpicture}
    \node[circle,scale=0.4,fill=black,label={\tiny$e$}] (e) at (0.25,0) {};
    \node[circle,scale=0.4,fill=black,label={\tiny$f$}] (f) at (1.25,0) {};
        \node[circle,scale=0.4,fill=black] (u) at (0,-0.75) {};
	\node[circle,scale=0.4,fill=black] (v) at (0.5,-0.75) {};
	\node[circle,scale=0.4,fill=black] (w) at (1,-0.75) {};
	\node[circle,scale=0.4,fill=black] (x) at (1.5,-0.75) {};
        \path[thick,purple] (e) edge (u);
	\path[thick,purple] (f) edge  (w);
	\path[dashed, thick,blue] (e) edge  (v);
	\path[dashed, thick,blue] (f) edge  (x);
    \end{tikzpicture}} 
& \begin{tabular}{c}{\small$~~^{(\star)}$}\\ $\geq 2$\end{tabular} 
& \begin{tabular}{c}{\small$~~^{(\star)}$}\\ $\geq 0$\end{tabular} 
\\
\noalign{\medskip}
\midrule[0.25pt]
\end{tabular}
\caption{Analysis of the change in the size of the common part of the two  matchings after one step of the coupling. In cases marked by  $^{(\star)}$, the difference could be larger if 
$
    \sfX_0(\sfY_0(e)) = \sfY_0(f) $
or $\sfY_0(\sfX_0(e)) = \sfX_0(f)
    $.}
\label{tab:coupling}
\end{table}

The corresponding value of $|\sfC_1|-|\sfC_0|$ in each of these cases are given in Table~\ref{tab:coupling}, from which it follows that under an invariant measure $\mu$, 
\begin{equation}\label{eq:Y-bd}
   0 =  \EE[|\sfC_1|-|\sfC_0|] \ge 2\, \PP(e,f\notin \sfC_0) - 4\, \PP(\textup{coupling fails})\,.
\end{equation}
For the first term on the \abbr{rhs} of \eqref{eq:Y-bd},
\begin{align}\label{eq:1st-term-in-Y-bd}
    \PP(e,f\notin \sfC_0 \mid \sfC_0) &= 
    \left(\frac{\sfm_n-|\sfC_0|}{\sfm_n}\right)
    \left(\frac{\sfm_n-|\sfC_0|-1}{\sfm_n-1}\right) 
    \,.
\end{align}
Combining these, we get that the \abbr{lhs} of~\eqref{e:deltaEn} is at most $2\sfm_n (\sfm_n-1) \,\PP(\textup{coupling fails})$.

For the latter, note that the coupling fails only under one of the following scenarios:
\begin{enumerate}[(a)]
    \item \emph{introducing a loop}: $v(e)=v(f)$ or $v(\sfY_0(e))=v(\sfY_0(f))$;
    \item \emph{introducing multiple edges}: $v(e)$ is connected to $v(f)$ in $\sfY_0 \setminus \{(e,\sfY_0(e)),(f,\sfY_0(f))\}$, or 
$v(\sfY_0(e))$ is connected to $v(\sfY_0(f))$ in $\sfY_0  \setminus \{(e,\sfY_0(e)),(f,\sfY_0(f))\}$.
\end{enumerate}
As $(\sfY_0(e),\sfY_0(f))$ has the same (uniform) distribution as $(e,f)$, we thus deduce that
\[
    \frac{1}{2} \PP(\textup{coupling fails})
    \le
     \PP(v(e)=v(f)) + \PP(v(e) \textup{ connected to }v(f)) \,.
\]
With $q_{ij}$ denoting the probability that $i \ne j$ are adjacent in $\sfY_0$, clearly
\[
\PP(v(e)\textup{ connected to }v(f)) = \sum_{i \ne j}  \frac{(D_i-1)(D_j-1)}{\sfm_n(\sfm_n-1)}q_{ij}\,.
\]
Similarly, recalling that $\sum_{j} q_{ij} = D_i$ for any $i \in [n]$, we have that
\[
\PP(v(e)=v(f))=\sum_{i=1}^n \frac{D_i(D_i-1)}{\sfm_n(\sfm_n-1)} = \sum_{i \ne j} 
\frac{(D_i+D_j)/2-1}{\sfm_n(\sfm_n-1)} q_{ij} \,.
\]
Adding these expressions and reducing the sum by symmetry to $j>i$, we arrive at
\begin{equation}\label{e:bd-coupling}
\frac{1}{2} \PP(\textup{coupling fails})
    \le \frac{1}{\sfm_n(\sfm_n-1)} \sum_{i=1}^{n-1} \sum_{j=i+1}^n  (2D_i D_j-D_i - D_j) q_{ij}\,.
\end{equation}
With $j \mapsto D_j$ non-decreasing and $\sum_{j>i} q_{ij} \le D_i$, by
replacing $q_{ij}$ with $\one_{\{j \le i+D_i\}}$ we upper bound the \abbr{rhs} 
of \eqref{e:bd-coupling}. 
Combining this with~\eqref{eq:Y-bd}--\eqref{eq:1st-term-in-Y-bd} establishes \eqref{e:deltaEn}, 
thereby 
concluding the proof of Proposition~\ref{prop:coupling}. \qed

\section{Analysis of the limiting density}\label{sec:dens}

\begin{remark}\label{rem:pm}
With $\nu^{(2)}$ denoting the pushforward of $\nu$ by the map $x \mapsto x^2$
(that is, the weak limit of $\cL^{\bLambda_n^2}$), we have similarly to Remark~\ref{rem:goe} that
$\mu_{\textsc{mp}} = \nu^{(2)} \boxtimes \sigma_{\semic}^{(2)}$,
where the pushforward $\sigma_{\semic}^{(2)}$
(of density $(2\pi)^{-1} \sqrt{4/x-1}$ on $[0,4]$),
is the limiting empirical distribution of
singular values of $n^{-1/2} \widetilde \bX_n$.
\end{remark}

\begin{proof}[\emph{\textbf{Proof of Proposition~\ref{lem-mu-Stil}}}]
The matrix $\bM_n := n^{-1} \widetilde \bX_n \bLambda_n^2 \widetilde \bX_n^\star$
has the same \abbr{esd} as
$n^{-1} \bLambda_n \widetilde \bX_n \widetilde \bX_n^\star \bLambda_n$.
Thus, $\mu_{\textsc{mp}}$ is also the limiting \abbr{esd}
for $\bM_n$ (see \cite{MaPa67,SiBa95}). Taking
$\cL^{\bLambda_n} \Rightarrow \nu$ with $d \nu/d\nu_{\hat D} (x) = x$
yields the Cauchy--Stieltjes transform $G_{\mu_{\textsc{mp}}}(z) = h(z)$
which is the unique decaying
to zero as $|z| \to \infty$, $\CC_+$-valued analytic on $\CC_+$, solution of
\begin{align}\label{eq-h(z)}
h =  \left(\EE\left[\tfrac{\hat D^2}{1 + h\hat D}\right]-z\right)^{-1}
=  - z^{-1} \EE \left[\tfrac{\hat D}{1 + h \hat D}\right] \,.
\end{align}
Indeed, the \abbr{lhs} of \eqref{eq-h(z)} merely re-writes
the fact that $\xi(\cdot)$ of \eqref{eq-h(z)-inv} is such that
$\xi(h(z))=z$ on $\CC_+$, while having
$\int x d \nu_{\hat D} = 1$, one thereby gets the
\abbr{rhs} of \eqref{eq-h(z)} by elementary algebra.
Recall \cite[Prop. 5(a)]{ArPe09} that the Cauchy--Stieltjes transform
of the symmetric measure $\widetilde \mu$
having the pushforward $\widetilde \mu^{(2)} =\mu_{\textsc{mp}}$
under the map $x \mapsto x^2$, is given for $\Re(z)>0$
by $g(z)=z h(z^2):\CC_+ \mapsto \CC_+$,
which by the \abbr{rhs} of \eqref{eq-h(z)} satisfies for $\Re(z)>0$,
\begin{align}\label{eq-g(z)}
g = - \EE\left[\frac{\hat D}{z + g \hat D}\right] \,.
\end{align}
By the symmetry of the measure $\widetilde \mu$ on $\RR$ we
know that $g(-\bar z)=-\bar g(z)$ thereby extending
the validity of \eqref{eq-g(z)} to all $z \in \CC_+$.
Applying the implicit function theorem in a suitable
neighborhood of $(-z^{-1},g)=(0,0)$ we
further deduce that $g(z)=G_{\widetilde \mu}(z)$ is
the unique $\CC_+$-valued, analytic on $\CC_+$ solution of
\eqref{eq-g(z)} tending to zero as $\Im(z) \to \infty$.
Recall the $S$-transform defined via~\eqref{dfn:m-trans}--\eqref{eq:S-trans} 
for $\varphi \ne \delta_0$ supported on $\RR_+$ and similarly
for symmetric measure $\psi$. In particular (see~\cite[Eqn. (20]{ArPe09}),
\[S_{\sigma_\semic}(w)=w^{-1/2}\,.\] 
Further, from~\eqref{dfn:m-trans} we see that
\eqref{eq-g(z)} results with $m_{\nu_{\hat D}} (-z^{-1} g) = g^2$,
yielding 
\[ S_{\nu_{\hat D}}(g^2)= - (1+g^{-2}) z^{-1} g\,.\]
Since
$S_\mu (w) = S_{\nu_{\hat D}}(w) S_{\sigma_{\semic}}(w)$,
we get
$S_\mu(g^2)= - (1+g^{-2}) z^{-1}$ and consequently $m_\mu(-z^{-1})=g^2$.
The latter amounts to
\begin{equation}\label{eq:f-form2}
f(z): = -z^{-1} (1+g^2) = \int \frac{1}{-t-z} d\mu(t) \,,
\end{equation}
which since $\mu$ is symmetric, matches the stated relation
$f(z) = G_\mu(z)$ of \eqref{eq:f-form}.
\end{proof}

\begin{proof}[\textbf{\emph{Proof of Proposition~\ref{prop:density}}}]
Recall from~\eqref{eq:f-form2} that
$f(z)=- z h(z^2)^2 - z^{-1}$ for $z \in \CC_+$ and $\Re(z)>0$.
When $z \to x \in (0,\infty)$ we further have that
$h(z^2) \to h(x^2)$ and hence
\begin{equation}\label{eq:plemelj}
\frac{1}{\pi} \Im(f(z)) \to - \frac{1}{\pi} \Im(x h(x^2)^2)
= - 2 \Re(h(x^2))
\widetilde \brho(x) \,,
\end{equation}
where the last identity is due to \eqref{dfn:pm-density}.
Thus, for a.e. $x > 0$
the density $\brho(x)$
exists
and given by Plemelj formula, namely the
\abbr{rhs} of \eqref{eq:plemelj}. The continuity
of $x \mapsto h(x)$ implies the same for the
symmetric density $\brho(x)$, thereby we
deduce the validity of \eqref{dfn:rho} at \emph{every} $x \ne 0$.
While proving \cite[Thm. 1.1]{SiCh95} it was shown
that $h(z)$ extends analytically around each $x \in \RR \setminus \{0\}$
where $\Im(h(x))>0$ (see also Remark~\ref{rem:pm-support}).
In particular, \eqref{dfn:rho} implies that
$\brho(x)$ is real analytic at any $x \ne 0$ where it is positive.
Further, in view of \eqref{dfn:rho}, the support identity
$\supp(mu)=\supp(\widetilde \mu)$ is an immediate consequence of
having $\Re(h(x))<0$ for all $x>0$ (as
shown in Lemma~\ref{lem:g-bdd}). Similarly,
the stated relation with $\supp(\mu_{\textsc{mp}})$ follows from
the explicit relation $\widetilde \brho (x) = |x| \brho_{\textsc{mp}}(x^2)$.
Finally, Lemma~\ref{lem:g-bdd} provides the stated bounds
on $\widetilde \brho$ and $\brho$
(see \eqref{eq:g-bdd} and \eqref{eq:rho-bdd}, respectively),
while showing that if $\nu_{\hat D}(\{0\})=0$ then $\mu$ is absolutely
continuous.
\end{proof}

Our next lemma provides the estimates we deferred when
proving Proposition~\ref{prop:density}.
\begin{lemma}\label{lem:g-bdd}
The function $g(z)=G_{\widetilde \mu}(z)$ satisfies
\begin{equation}\label{eq:g-bdd}
|g(z)| \leq 1 \wedge \frac{2}{|\Re(z)|} \,, \qquad \forall z \in \CC_+ \cup \RR
\end{equation}
and \eqref{eq-g(z)} holds for $z \in \CC_+ \cup \RR \setminus \{0\}$,
resulting with $\Re(h(x))<0$ for $x>0$. In addition
\begin{equation}\label{eq:rho-bdd}
\brho(x) \le \frac{1}{\pi} \big( \EE \hat D^{-2})^{1/2} \wedge 4 |x|^{-3} \big)
\qquad \forall x \in \RR \,,
\end{equation}
and if $\nu_{\hat D}(\{0\})=0$, then
$\mu(\{0\})=0$.
\end{lemma}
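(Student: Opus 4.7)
The plan is to derive all five assertions from the defining equation \eqref{eq-g(z)} via two Cauchy--Schwarz applications and straightforward real/imaginary part extraction. Throughout set $A(z) := \E[\hat D/|z+g\hat D|^2]$ and $B(z) := \E[\hat D^2/|z+g\hat D|^2]$; taking real and imaginary parts of \eqref{eq-g(z)} yields the identities
\begin{align*}
\Re g\,(1+B) = -\Re z\cdot A\,, \qquad \Im g\,(1-B) = \Im z\cdot A \,.
\end{align*}
Cauchy--Schwarz applied to $g = -\E[1\cdot\hat D/(z+g\hat D)]$ gives $|g|^2 \leq B$, while applied to $g = -\E[\hat D^{1/2}\cdot\hat D^{1/2}/(z+g\hat D)]$ (using $\E\hat D=1$) it gives $|g|^2 \leq A$. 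For $z \in \C_+$, $\Im g > 0$ and $A > 0$ (since $\nu_{\hat D}\ne\delta_0$), so the second identity forces $B<1$, and hence $|g(z)|^2 \leq B < 1$. Combining $|g|^2 \leq A$ with the real-part identity (for $\Re z\neq 0$) yields $|g|^2 \leq |\Re g|(1+B)/|\Re z| \leq |g|(1+B)/|\Re z|$, whence $|g(z)| \leq (1+B)/|\Re z| \leq 2/|\Re z|$. Both bounds pass to $z \in \R \setminus \{0\}$ as boundary values of $g$.

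The strict inequality $\Re g < 0$ when $\Re z > 0$, and thus $\Re h(x) < 0$ for $x>0$ via $g(z) = z h(z^2)$, is immediate from the real-part identity combined with $A>0$ and $1+B>0$. To extend \eqref{eq-g(z)} to $\R\setminus\{0\}$, the plan is to pass $\Im z \to 0^+$ in the $\C_+$ identity via dominated convergence: on the interior of $\mathrm{supp}\,\widetilde\mu$ the integrand $\hat D/(z+g\hat D)$ is controlled by $\hat D^{-1}/\Im g(x)^2$ on $\{\hat D>0\}$, while on its complement the boundedness is inherited from the analytic extension of $h$ described in \cite[Thm.~1.1]{SiCh95}.

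For the density estimates, combining $\rho(x) = -2\,\Re h(x^2)\,\widetilde\rho(x)$ with $\pi\widetilde\rho(x) = \Im g(x)$ and $\Re h(x^2) = \Re g(x)/x$ yields $\pi\rho(x) = 2|\Re g(x)|\Im g(x)/|x|$. The AM--GM inequality $2|\Re g|\Im g \leq |g|^2$ together with $|g|\leq 2/|x|$ gives $\pi\rho(x) \leq |g(x)|^2/|x| \leq 4/|x|^3$. For the $(\E\hat D^{-2})^{1/2}$ estimate, note that on $\mathrm{supp}\,\widetilde\mu$ the imaginary-part identity (with $\Im z=0$, $\Im g>0$) forces $B=1$, whence $A = 2|\Re g|/x$ and $\pi\rho(x) = A\,\Im g$. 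The lower bound $|x+g\hat D| \geq \hat D\,\Im g$ for $\hat D > 0$ gives $A\,\Im g = \E[\hat D\,\Im g/|x+g\hat D|^2] \leq \E[1/|x+g\hat D|]$; a further Cauchy--Schwarz, writing $1/|x+g\hat D| = \hat D^{-1}\cdot\hat D/|x+g\hat D|$, then produces $\pi\rho(x) \leq (\E\hat D^{-2})^{1/2}\sqrt{B} = (\E\hat D^{-2})^{1/2}$.

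Finally, the symmetry of $\widetilde\mu$ forces $g(iy) = i c(y)$ with $c(y) \in (0,1]$ satisfying $c = \E[\hat D/(y+c\hat D)]$, and the relation $f(z) = -(1+g^2)/z$ gives $y\,\Im f(iy) = 1 - c(y)^2$. Since $\mu(\{0\}) = \lim_{y\to 0^+} y\,\Im G_\mu(iy)$, the claim reduces to computing $c(0^+)$: dominated convergence in the equation for $c$ (with the integrand bounded by $1/c$ on $\{\hat D>0\}$ and vanishing on $\{\hat D=0\}$) produces $c(0^+)^2 = 1 - \nu_{\hat D}(\{0\})$, so $\mu(\{0\}) = \nu_{\hat D}(\{0\})$. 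The principal technical delicacy throughout is justifying these boundary limits, which rests on the monotonicity of $c(y)$ in $y$ and the uniform bound $|g|\leq 1$ obtained at the outset.
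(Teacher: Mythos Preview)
Your proof is correct and follows essentially the same route as the paper: the same quantities $A,B$, the same real/imaginary decomposition of \eqref{eq-g(z)}, and the same Cauchy--Schwarz (equivalently, Jensen under the size-biased law) estimates. A few executional differences are worth noting. For the extension of \eqref{eq-g(z)} to $\R\setminus\{0\}$, the paper uses a single uniform-integrability argument---the bound $A(z)\le 4/|\Re z|^2$ gives $L^2$-control of $(z+gD)^{-1}$ under the size-biased law $\nu$, hence $L^1$-convergence---which is cleaner than your support/complement case split; in particular, your stated dominator on the interior should be $1/\Im g(x)$ (from $|z+g\hat D|\ge \hat D\,\Im g$), not $\hat D^{-1}/\Im g(x)^2$. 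For the $(\E\hat D^{-2})^{1/2}$ bound the paper bounds $|f(z)|$ directly on $\C_+$ via $f=-\E[(z+g\hat D)^{-1}]$ and Cauchy--Schwarz, avoiding your detour through $B=1$ on the support (though your route is fine). Finally, your monotonicity-of-$c(y)$ argument is a nice alternative to the paper's direct lower bound $\gamma\ge \delta\nu_{\hat D}([\delta,\infty))/(\eta+\gamma\delta)$, and in fact delivers the sharper identity $\mu(\{0\})=\nu_{\hat D}(\{0\})$ rather than just the implication stated in the lemma.
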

\begin{proof} As explained when proving Proposition~\ref{lem-mu-Stil},
by the symmetry of $\widetilde \mu$, we only need to consider $\Re(z) \ge 0$.
Starting with $z \in \CC_+$, let
\begin{align*}
z    &=x+i\eta \qquad \mbox{ for $x \ge 0$ and } \eta > 0 \,,\\
g(z) &= - y + i \gamma \, \quad\mbox{ for $y\in \RR$ and }\gamma>0\,.
\end{align*}
Then, separating the real and imaginary parts of~\eqref{eq-g(z)} gives
\begin{align}\label{eq-y-gamma}
y = \EE\left[ \hat D (x-y\hat D)\hat W^{-2} \right]\,,\quad &
\gamma = \EE\left[\hat D(\eta + \gamma \hat D) \hat W^{-2}\right]\,,
\end{align}
where $\hat W := |z + g(z) \hat D|$
must be a.s.\ strictly positive (or else $\gamma=\infty$).
Next, defining
\begin{equation}\label{eq:AB-def}
A = A(z) := \EE[\hat D {\hat W}^{-2}]\,,\qquad
B = B(z) := \EE[{\hat D}^2 {\hat W}^{-2}]\,,
\end{equation}
both of which are positive and finite
(or else $\gamma=\infty$),
translates \eqref{eq-y-gamma} into
\begin{align*}
y =  A x - B y\,,\qquad &
\gamma = A \eta  + B \gamma\,.
\end{align*}
Therefore,
\begin{equation}
\label{eq-y-gamma-sol}
y = \frac{A x}{1+B}\,,\qquad \gamma = \frac{A \eta}{1-B}\,.
\end{equation}
Since $\gamma>0$, necessarily $0<B<1$ and $y \ge 0$ is strictly
positive iff $x>0$. Next, by \eqref{eq-g(z)}, Jensen's inequality
and \eqref{eq:AB-def},
\begin{align}
\label{eq:bd-W}
|g|
\le \EE \Big[ \hat D \hat W^{-1} \Big] := V(z) \le \sqrt{B} \le 1 \,.
\end{align}
Further, letting $D \sim \nu$ be the size-biasing of $\hat D$
and $W:=|z+g(z) D|$, we have that
\begin{equation}\label{eq:size-bias}
g(z)=-\EE[(z+g(z)D)^{-1}]\,,\;\;\;
V=\EE[W^{-1}]\,, \;\;\;
A=\EE[W^{-2}]\,.
\end{equation}
With $B < 1$ we thus have by \eqref{eq-y-gamma-sol},
\eqref{eq:size-bias} and Jensen's inequality, that
\[
\frac{|x| A}{2} \le \frac{|x|A}{1+B} = |y| \le |g| \le V \le
\sqrt{A} \,.
\]
Consequently, $|g(z)| \le \sqrt{A} \le 2/|x|$ as claimed.
Next, recall \cite[Theorem~1.1]{SiCh95} that $h(z) \to h(x)$
whenever $z \to x \ne 0$, hence same applies to $g(\cdot)$
with \eqref{eq:g-bdd} and the bound $B(z) \le 1$,
also applicable throughout $\RR \setminus \{0\}$.
Further, having $z_n \to x \ne 0$ implies that $|\Re(z_n)|$ is bounded
away from zero, hence $\{A(z_n)\}
$ are uniformly bounded.
In view of \eqref{eq:size-bias}, this yields the uniform integrability
of $(z_n+g(z_n) D)^{-1}$ and thereby its $L_1$-convergence
to the absolutely-integrable $(x+g(x) D)^{-1}$. Appealing
to the representation \eqref{eq:size-bias} of $g(z)$
we conclude that \eqref{eq-g(z)} extends to $\RR \setminus \{0\}$.
Utilizing \eqref{eq-g(z)} at $z=x > 0$ we see that
$0 < |g(x)|^2 \le A(x)$
due to \eqref{eq:size-bias}.
Hence, from \eqref{eq-y-gamma} we have as claimed,
\[
\Re(h(x^2)) =  x^{-1} \Re(g(x)) = \frac{-A(x)}{1+B(x)} < 0 \,.
\]
From \eqref{eq-y-gamma-sol} we have that $g(z)=i \gamma$
when $z=i\eta$,
where by \eqref{eq-g(z)}, for any $\delta>0$,
\[
\gamma=\EE\Big[\frac{\hat D}{\eta + \gamma \hat D}\Big]
\ge \frac{\delta}{\eta + \gamma \delta} \nu_{\hat D}([\delta,\infty)) \,.
\]
Taking $\eta \downarrow 0$ followed by $\delta \downarrow 0$ we see that
$\gamma(i \eta) \to \gamma(0) = 1$, provided $\nu_{\hat D}(\{0\})=0$.
By definition of the Cauchy--Stieltjes transform
and bounded convergence, we have then
\[
\mu(\{0\})=-\lim_{\eta \downarrow 0} \Re(i \eta f(i \eta))
= 1 - [\lim_{\eta \downarrow 0} \gamma(i \eta) ]^2 = 0 \,,
\]
due to \eqref{eq:f-form2} (and having $\Re(g(i \eta))=0$).
Finally, from \eqref{eq-g(z)} and the \abbr{lhs} of \eqref{eq:f-form2}
we have that $f(z) = -\EE[(z+ g(z) \hat D)^{-1}]$ throughout $\CC_+$, hence
by Cauchy--Schwarz
\[
|f(z)| \le \EE[\hat W^{-1}]
\le \sqrt{B(z) \EE [\hat D^{-2}]} \le \EE[\hat D^{-2}]^{1/2}
\]
is uniformly bounded when $\EE \hat D^{-2}$ is finite.
Up to factor $\pi^{-1}$ this yields the stated uniform bound on $\brho(x)$,
namely the \abbr{rhs} of \eqref{eq:plemelj}. At any $x>0$ the latter
is bounded above also by $\frac{1}{\pi x} |g(x)|^2$, with \eqref{eq:rho-bdd}
thus a consequence of \eqref{eq:g-bdd}.
\end{proof}

\begin{proof}
[\emph{\textbf{Proof of Corollary~\ref{cor:two-atoms}}}]
Fixing $\alpha>\eta>0$ we have that
\[
\nu_{\hat D} (\{\alpha\})=q_o\,,\quad \nu_{\hat D} (\{\eta\})=1-q_o
\]
and since $1 = \EE \hat D = \alpha q_o + \eta(1-q_o)$, further
$\alpha>1>\eta$.
By Remark~\ref{rem:pm-support} we identify $\supp(\mu)$ upon
examining the regions in which $\xi'(-v)>0$ for $\RR$-valued
$v \notin \{0,\alpha^{-1},\eta^{-1}\}$.
Since $\Re(h(x))<0$ for $x>0$ (see Lemma \ref{lem:g-bdd}),
for $\supp(\mu) \cap \RR_+$ it suffices to consider the sign of
\[
\xi'(-v) = \frac1{v^2} - \frac{q\alpha^2}{(1-v\alpha)^2} - \frac{(1-q)\eta^2}{(1-v\eta)^2}\,,
\]
when $v \in (0,\infty) \setminus \{ \alpha^{-1},\eta^{-1}\}$ and
$q := \alpha q_o$.
Observe that $\xi'(-v)>0$ for such $v$ iff
\begin{align*}
P(v) & := av^3+bv^2+cv+d  \\
&= -2\alpha\eta(q\eta+(1-q)\alpha) v^3 + \left(q\eta^2 + 4\alpha\eta + (1-q)\alpha^2\right) v^2 - 2(\alpha + \eta) v + 1  > 0 \,.
\end{align*}
Noting that $\lim_{v\to\infty} P(v)=-\infty$ and $\lim_{v \downarrow 0}P(v)=1$, we infer from Remark~\ref{rem:pm-support} that $\supp(\mu)$ has holes iff $P(v)$ has three distinct positive roots.
As Descrates' rule of signs is satisfied ($a,c<0$ and $b,d>0$), the latter occurs iff the discriminant $\sfD(P)$ is positive. Evaluating $\sfD(P)$ shows that
\[  \sfD(P) = b^2 c^2 - 4 a c^3 - 4b^3 d + 18abcd - 27a^2 d^2 =  4 q (1-q) (\alpha-\eta)^2 \big( \alpha \phi
- q \theta \big) \,,
\]
where
\begin{align*}
  \theta := (\alpha - \eta)(\alpha + \eta)^3\,,\qquad
  \phi := (\alpha - 2 \eta)^3 \,.
\end{align*}
Having $q=\alpha q_o$ and $\theta>0$ we conclude that
$\sfD(P)>0$ iff $\phi/\theta > q_o$. That is
\[
\frac{\phi}{\theta} = \frac{(\alpha-2\eta)^3}
{(\alpha - \eta)(\alpha + \eta)^3} >
\frac{1-\eta}{\alpha-\eta} = q_0 \,.
\]
For $\varphi:=3\eta/(\alpha+\eta)$ and $\eta \in (0,1)$ this
translates into $1-\varphi > (1-\eta)^{1/3}$, or equivalently
\[
\frac{\alpha}{\eta} + 1 = \frac{3}{\varphi} >
\frac{3}{1-(1-\eta)^{1/3}} \,,
\]
as stated in \eqref{eq-support-holes}.
\end{proof}

\subsection*{Acknowledgment} The authors wish to thank Nick Cook, Alice Guionnet, Allan Sly 
and Ofer Zeitouni for many helpful discussions.
A.D.\ was supported in part by NSF grant DMS-1613091.
E.L.\ was supported in part by NSF grants DMS-1513403 and DMS-1812095.


\bibliographystyle{abbrv}
\bibliography{sparse_density}

\begin{thebibliography}{10}

\bibitem{AGZ10}
G.~W. Anderson, A.~Guionnet, and O.~Zeitouni.
\newblock {\em An introduction to random matrices}, volume 118 of {\em
  Cambridge Studies in Advanced Mathematics}.
\newblock Cambridge University Press, Cambridge, 2010.

\bibitem{ArPe09}
O.~Arizmendi~E. and V.~P{\'e}rez-Abreu.
\newblock The {$S$}-transform of symmetric probability measures with unbounded
  supports.
\newblock {\em Proc. Amer. Math. Soc.}, 137(9):3057--3066, 2009.

\bibitem{AL05}
R.~Arratia and T.~M. Liggett.
\newblock How likely is an i.i.d. degree sequence to be graphical?
\newblock {\em Ann. Appl. Probab.}, 15(1B):652--670, 2005.

\bibitem{BeVo93}
H.~Bercovici and D.~Voiculescu.
\newblock Free convolution of measures with unbounded support.
\newblock {\em Indiana Univ. Math. J.}, 42(3):733--773, 1993.

\bibitem{Bordenave17}
C.~Bordenave.
\newblock Spectrum of random graphs.
\newblock In {\em Advanced topics in random matrices}, volume~53 of {\em Panor.
  Synth\`eses}, pages 91--150. Soc. Math. France, Paris, 2017.

\bibitem{BoLa10}
C.~Bordenave and M.~Lelarge.
\newblock Resolvent of large random graphs.
\newblock {\em Random Structures Algorithms}, 37(3):332--352, 2010.

\bibitem{CoSa18}
S.~Coste and J.~Salez.
\newblock Emergence of extended states at zero in the spectrum of sparse random
  graphs.
\newblock 2018.
\newblock Preprint, available at \texttt{arXiv:1809.07587}.

\bibitem{DuPa12}
I.~Dumitriu and S.~Pal.
\newblock Sparse regular random graphs: spectral density and eigenvectors.
\newblock {\em Ann. Probab.}, 40(5):2197--2235, 2012.

\bibitem{EG60}
P.~Erd{\H o}s and T.~Gallai.
\newblock Graphs with prescribed degrees of vertices.
\newblock {\em Mat. Lapok}, 11:264--274, 1960.

\bibitem{GS18}
C.~Greenhill and M.~Sfragara.
\newblock The switch {M}arkov chain for sampling irregular graphs and digraphs.
\newblock {\em Theoret. Comput. Sci.}, 719:1--20, 2018.

\bibitem{HHN16}
W.~Hachem, A.~Hardy, and J.~Najim.
\newblock Large complex correlated {W}ishart matrices: fluctuations and
  asymptotic independence at the edges.
\newblock {\em Ann. Probab.}, 44(3):2264--2348, 2016.

\bibitem{MaPa67}
V.~A. Mar{\v{c}}enko and L.~A. Pastur.
\newblock Distribution of eigenvalues for some sets of random matrices.
\newblock {\em Mathematics of the USSR-Sbornik}, 1(4):457--483, 1967.

\bibitem{RaSp07}
N.~R. Rao and R.~Speicher.
\newblock Multiplication of free random variables and the {$S$}-transform: the
  case of vanishing mean.
\newblock {\em Electron. Comm. Probab.}, 12:248--258, 2007.

\bibitem{Salez16}
J.~Salez.
\newblock Spectral atoms of unimodular random trees.
\newblock {\em J. Eur. Math. Soc. (JEMS)}, 22(2):345--363, 2020.

\bibitem{SiBa95}
J.~W. Silverstein and Z.~D. Bai.
\newblock On the empirical distribution of eigenvalues of a class of
  large-dimensional random matrices.
\newblock {\em J. Multivariate Anal.}, 54(2):175--192, 1995.

\bibitem{SiCh95}
J.~W. Silverstein and S.-I. Choi.
\newblock Analysis of the limiting spectral distribution of large-dimensional
  random matrices.
\newblock {\em J. Multivariate Anal.}, 54(2):295--309, 1995.

\bibitem{Tao12}
T.~Tao.
\newblock {\em Topics in random matrix theory}, volume 132 of {\em Graduate
  Studies in Mathematics}.
\newblock American Mathematical Society, Providence, RI, 2012.

\bibitem{Taylor81}
R.~Taylor.
\newblock Constrained switchings in graphs.
\newblock In {\em Combinatorial mathematics, {VIII} ({G}eelong, 1980)}, volume
  884 of {\em Lecture Notes in Math.}, pages 314--336. Springer, Berlin-New
  York, 1981.

\bibitem{TVW13}
L.~V. Tran, V.~H. Vu, and K.~Wang.
\newblock Sparse random graphs: eigenvalues and eigenvectors.
\newblock {\em Random Structures Algorithms}, 42(1):110--134, 2013.

\bibitem{Voiculescu87}
D.~Voiculescu.
\newblock Multiplication of certain noncommuting random variables.
\newblock {\em J. Operator Theory}, 18(2):223--235, 1987.

\end{thebibliography}

\end{document}